\pgfplotsset{compat=newest}   
\setlist[enumerate]{leftmargin=.5in}
\setlist[itemize]{leftmargin=.5in}
\newtheorem{assumption}{Assumption}
\crefname{assumption}{Assumption}{Assumptions}
\theoremstyle{plain}
\def\var{{\rm Var }}
\def\q {$\kern15pt$}    
\def\?{\discretionary{}{}{}}  
\def\vol{{\rm vol}}
\def\Var{{\rm Var}}
\def\Cov{{\rm Cov}}
\def\MISE{{\rm MISE}}
\def\AMISE{{\rm AMISE}}
\def\ISB{{\rm ISB}}
\def\AISB{{\rm AISB}}
\def\IV{{\rm IV}}
\def\AIV{{\rm AIV}}
\def\tr{{\sf t}}
\def\d{{\rm d}}
\def\ind{{\rm ind}}
\def\cB{\mathcal{B}}
\def\cO{\mathcal{O}}
\def\cS{\mathcal{S}}
\newcommand{\frakv}{{\mathfrak{v}}}
\newif\ifnotes\notestrue
\def\perhaps#1{{\color{gray}#1\color{black}}}
\def\hpierre#1{}
\newcommand{\hmpierre}[1]{{}}
\def\hamal#1{}
\def\hart#1{}
\def\hflorian#1{}
\definecolor{yel}{cmyk}{0.1,0.95,1.0,0.0}
\definecolor{head}{cmyk}{0.1,1.0,0.8,0.15}
\definecolor{vector}{cmyk}{0.0,0.8,1.0,1.0}  
\definecolor{tan}{cmyk}{0.30,0.50,0.60,0}
\definecolor{orange}{cmyk}{0.0,0.6,1.0,0.1}
\definecolor{emp}{cmyk}{1.0,0.5,0.0,0}       
\definecolor{names}{cmyk}{1.0,0.0,1.0,0.14}
\definecolor{pink}{cmyk}{0.0,0.8,0,0}
\definecolor{paleyellow}{cmyk}{0,0,0.6,0.0}
\definecolor{darkyellow}{cmyk}{0,0.2,1.0,0.2}
\title{ Density estimation by Randomized Quasi-Monte Carlo%
\thanks{ 
\funding{This work has been supported by a Canada Research Chair, an Inria International Chair,
an IVADO Ressearch Grant, and NSERC Discovery Grant number RGPIN-110050 to P. L'Ecuyer.   
A. B. Owen was supported by the US National Science Foundation
under Grants IIS-1837931, DMS-1521145 and DMS-1407397.
The collaboration was also supported by the NSF Grant DMS-1638521 to SAMSI.}}
}
\author{Amal Ben Abdellah 
\thanks{%
	DIRO, University of Montreal, 2920 Chemin de La Tour, Pavillon Aisenstadt, Montreal, QC, H3T 1N8, Canada
	(\email{amal.ben.abdellah@umontreal.ca}, \email{lecuyer@iro.umontreal.ca}, \email{florian.puchhammer@umontreal.ca}).
}
\and 
Pierre L'Ecuyer\footnotemark[2]
\and
Art B. Owen
\thanks{Department of Statistics, Stanford University, Sequoia Hall, 390 Serra Mall, Stanford, CA, 94305-4065, USA
(\email{owen@stanford.edu}).
  }
\and 
Florian Puchhammer\footnotemark[2]
}
\begin{document}

\maketitle

\begin{abstract}
We consider the problem of estimating the density of a random variable $X$ that can be sampled exactly by Monte Carlo (MC). We investigate the effectiveness of replacing MC by randomized quasi Monte Carlo (RQMC) or by stratified sampling over the unit cube, to reduce the integrated variance (IV) and the mean integrated square error (MISE) for kernel density estimators. We show theoretically and empirically that the RQMC and stratified estimators can achieve substantial reductions of the IV and the MISE, and even faster convergence rates than MC in some situations, while leaving the bias unchanged. We also show that the variance bounds obtained via a traditional Koksma-Hlawka-type inequality for RQMC are much too loose to be useful when the dimension of the problem exceeds a few units. We describe an alternative way to estimate the IV, a good bandwidth, and the MISE, under RQMC or stratification, and we show empirically that in some situations, the MISE can be reduced significantly even in high-dimensional settings.  
\end{abstract}

\begin{keywords}
 {Density estimation},
{quasi-Monte Carlo},
{stratification},
{variance reduction},
{kernel density},
{simulation}
\end{keywords}

\begin{AMS}
62G07, 
62G20, 
65C05, 
\end{AMS}

\section{Introduction}

We are interested in estimating by simulation the density of a random variable
$X = g(\bU)$ where $\bU = (U_1,\dots,U_{s}) \sim U[0,1]^s$ 
(uniform over the unit hypercube) and $g : [0,1]^s \to\RR$.
We assume that $g(\bu)$ can be computed easily for any $\bu\in [0,1]^s$,
that $X$ has density $f$ (with respect to the Lebesgue measure) over $\RR$
and we want to estimate $f$ over some bounded interval $[a,b]$.
A flurry of stochastic simulation applications fit this framework; see \cite{sASM07a,sLAW14a}, for example.
The vector $\bU$ represents the independent uniform random numbers that drive the simulation.

We denote by $\hat f_n$ a density estimator 
based on a sample of size $n$, and we measure the quality of the estimator 
over $[a,b]$ by the \emph{mean integrated square error} (MISE), defined as 
\[
  \MISE = \int_a^b \EE [\hat f_n(x) - f(x)]^2 \d x,
\]
which we want to minimize. The MISE can be decomposed as the sum of the 
\emph{integrated variance} (IV) and the \emph{integrated square bias} (ISB):
\[
  \MISE = \IV + \ISB 
	      = \int_a^b \EE (\hat f_n(x) - \EE [\hat f_n(x)])^2 \d x
	      + \int_a^b (\EE [\hat f_n(x)] - f(x))^2 \d x.
\]
{Minimizing the MISE generally involves a bias-variance tradeoff.}

The density is often estimated by a histogram for visualization, but one 
can do better with more refined techniques, such as a \emph{kernel density estimator} (KDE),
defined as follows.
One selects a \emph{kernel} $k : \RR\to\RR$, and a constant $h > 0$
called the \emph{bandwidth}, which acts as a horizontal stretching factor for the kernel.  
The kernels considered here are smooth probability densities that are symmetric about $0$.
In our experiments, we will use the Gaussian kernel, which is the standard normal density.
Given a sample $X_1,\dots,X_n$, the KDE at $x\in\RR$ is
\begin{equation}
 \hat f_n(x)     
     = \frac{1}{nh} \sum_{i=1}^n  k\left(\frac{x-X_i}{h}\right).   
   \label{eq:kde}
\end{equation}
\hpierre{nondecreasing on $(-\infty,0]$ and nonincreasing on $[0,\infty)$, have a finite mode $k(0)<\infty$,
a finite variance, are infinitely differentiable and have bounded derivatives of all orders.
In all our numerical experiments, we use the Gaussian kernel,
which is the standard normal density function $k(x) = \exp(-x^2/2) / \sqrt{2\pi}$.}

Density estimation methods such as KDEs were developed for the context where 
an independent sample $X_1,\dots,X_n$ from the unknown density $f$ is given.
Here we assume that we can generate a sample 
of arbitrary size by choosing where to sample.
With \emph{crude Monte Carlo} (MC), we would estimate the density from a sample 
$X_1,\dots,X_n$ of $n$ \emph{independent} realizations of $X$, obtained by simulation.  
Then the analysis is the same as if the data was collected from the real world,
and the standard KDE methodology would apply \cite{tSCO15a}.
In that context, the IV is $\cO(1/nh)$ and the ISB is $\cO(h^4)$, 
so the MISE is $\cO(n^{-4/5})$ if $h$ is chosen optimally.
This is slower than the $\cO(n^{-1})$ canonical rate for the variance when estimating the mean.

Our aim in this paper is to study if, when, and how using \emph{randomized quasi-Monte Carlo} (RQMC)
or \emph{stratification} can provide a KDE with a smaller MISE than with crude MC.
It is well known that when we estimate the mean $\EE[X]$ by the average $\bar X_n = (X_1 + \cdots + X_n)/n$,
under appropriate conditions, using RQMC provides an unbiased estimator whose variance
converges at a faster rate (in $n$) than the MC variance \cite{rDIC10a,vLEC18a,vLEC02a,vOWE97b,vOWE98a}.
This variance bound is easily proved by squaring a worst-case deterministic error bound obtained via
a version of the \emph{Koksma-Hlawka} (KH) inequality, which is a H\"older-type inequality that bounds
the worst-case integration error by a product of the variation of $g$ and the discrepancy of the 
set of points $\bU$ at which $g$ is evaluated.
\hflorian{Sometimes you use ``KH'' as ``Koksma--Hlawka inequality'' and sometimes it stands for ``Koksma--Hlawka'' only.}
Hundreds of papers have studied this.
Of course, the faster rate is an asymptotic property and the KH bound may hide a large constant factor, 
so it could happen that this bound is larger than the MC variance for a given $n$.
But in applications, the true RQMC variance is often much smaller than 
both the bound and the MC variance, even for moderate sample sizes.
The bottom line is that RQMC is practically useful in many applications, when estimating the mean by an average.
Stratification of the unit hypercube also provably reduces the variance of $\bar X_n$,
although its applicability degrades quickly with the dimension, 
and it is typically dominated by RQMC when the dimension exceeds 1 or 2 \cite{vLEC18a}.

Since the KDE \cref{eq:kde} at any given point is an average just like the estimator
of an expectation, it seems natural to use RQMC to estimate a density as well,
and to derive variance bounds via the same methods as for the mean estimator.
This was the starting point of this paper.
At first, we 
thought that the KH inequality would provide bounds on the IV of the KDE that converge faster for RQMC
than for MC, and that a faster convergence rate of the MISE would follow.  But things are not so simple.
The best upper bound on the IV that KH gave us is $\cO(n^{-2+\epsilon} h^{-2s})$ 
for any $\epsilon > 0$, while the ISB remains $\cO(h^4)$ as with MC.
This gives a bound of $\cO(n^{-4/(2+s) + \epsilon})$ on the MISE if we select $h$ to minimize this bound.
The unwelcome $h^{-2s}$ factor in the IV bound comes from the increase of the Hardy-Krause variation of each
summand in \cref{eq:kde} as a function of the underlying uniforms when $h$ decreases.
This effect grows exponentially in $s$.
To exploit the smaller power of $n$ in the IV bound to reduce the MISE bound, one must simultaneously 
decrease the ISB.  One can achieve this by taking a smaller $h$, which in turn drastically  increases
the IV bound.  This limits seriously the rate at which the MISE bound can converge.
The resulting rate for the bound beats the MC rate only for $s < 3$.
%
For a special type of RQMC method, namely a digital net with a nested uniform scramble, 
we also prove that the IV and MISE rates are never worse than for MC.

For the KDE combined with a stratification of the unit hypercube into subcubes, 
which could be seen as a weak form of RQMC, we obtain bounds that 
converge as $\cO(n^{-(s+1)/s} h^{-2})$ for the IV 
and $\cO(n^{-(2/3)(s+1)/s})$ for the MISE.   
The latter beats the MC rate for all $s < 5$.
These bounds are proved using arguments that do not involve KH and they are tight.
We show examples where the IV and the MISE with stratification behave just like the bounds.

These results do not imply that stratification works better than RQMC, 
or that RQMC does not beat MC in more than two dimensions.
The KH bounds are \emph{only upper bounds} and nothing precludes that the true IV and MISE
can be significantly smaller with RQMC than with MC or stratification, 
even if the RQMC variance bound is larger and converges more slowly.
At a minimum, we should test empirically 
how the KDE really behaves in terms of IV and MISE
under RQMC and under stratification.
We also need a procedure to choose a good bandwidth $h$ for the KDE 
with these sampling methods, since it will generally differ from a good $h$ with MC.
We do that in the second half of the paper.
Our aim is to assess empirically the improvements 
achieved for reasonable sample sizes $n$ in actual simulations.  
We use a regression model in log scale to estimate 
the IV and the MISE as functions of $h$ and $n$, and the optimal $h$ as a function of $n$.
We find that RQMC often reduces the IV and the MISE significantly, even in more than 3 dimensions,
and that it performs better than stratification.
Sometimes, the convergence rate of the MISE is not improved but there is 
a significant gain in the constant and in the actual MISE.
In all our experiments, the MISE was never larger with RQMC or stratification than with MC.
We prove that this always holds for stratification.
But for RQMC, we think that proving the observed gains in theorems would be very hard, 
hence the importance of testing with diverse numerical examples.

The remainder is organized as follows. 
In \Cref{sec:density-estimators}, we recall the definitions and basic properties of KDEs,
including a strategy to find a good $h$ under MC. 
In \Cref{sec:rqmc-int}, we recall classical error and variance bounds for RQMC integration.
In \Cref{sec:rqmc-iv}, we use classical QMC theory to derive KH bounds 
on the IV and the MISE for a KDE under RQMC, under reasonable assumptions.
In \Cref{sec:strat} we derive IV and MISE bounds for a KDE combined with stratification.
We have bounds that converge at a faster rate than for MC when the dimension is small.
We also show that stratification never increases the IV or MISE compared with MC.
In \Cref{sec:empirical}, we report on numerical experiments in which we estimate and compare 
the true IV and MISE of the KDE with MC, RQMC, and stratification, for various examples.
We also provide a method to find a good bandwidth $h$,
which is necessary for their effective implementation,
and we use a regression model to capture how the IV and the MISE really behave in the examples.
We give our conclusions in \Cref{sec:conclusion}.
\hpierre{When the paper is finalized, we plan to add an online-only supplement that will contain additional numerical results, 
goodness-of-fit tests, and results for RQMC and stratification for histograms instead of KDEs.}

{We adopt the usual $\Theta(\cdot)$ notation for the \emph{exact order}: 
$h(n) = \Theta(\varphi(n))$ means that there is some $n_0$ and constants $c_2 > c_1 > 0$
such that for all $n\ge n_0$, $c_1 \le h(n)/\varphi(n) \le c_2$.
This is less restrictive than $h(n) \propto \varphi(n)$. 
Also, $c(n,h) = \cO(\varphi(n,h))$ means that there is a constant $K > 0$ such that 
for all integers $n \ge 1$ and all $h\in (0,1]$, $c(n,h) \le K \varphi(n,h)$.}

\section{Kernel density estimators with MC}
\label{sec:density-estimators}


We recall asymptotic properties of the KDE with MC when 
$nh\to \infty$ and $h\to 0$ together.
The details can be found in \cite{tJON96a,tSCO15a,tWAN95a}, for example.
The asymptotic MISE, IV, and ISB in this regime are denoted
AMISE, AIV, and AISB, respectively.
If $\IV(n,h)$ denotes the IV for a given $(n,h)$,
writing $\AIV = \tilde g(n,h)$ for some function $\tilde g$ means that
$\lim_{n\to \infty,\, \tilde g(n,h)\to 0} \IV(n,h) / \tilde g(n,h) = 1$
and similarly for the AMISE and AISB.
{For measurable functions $\psi : \RR\to\RR$, we define 
the roughness functional $R(\psi) = \int_a^b (\psi(x))^2 \d x$ 
and the ``moments'' 
$\mu_{r}(\psi) = \int_{-\infty}^\infty x^r \psi(x) \d x$, for integers $r\ge 0$.}
{We make the following assumptions in the rest of the paper.}
\hflorian{
Below, we summarize several conditions on $k$ which are implicitly assumed in the rest of the paper.}

\begin{assumption}
 \label{ass:kGeneral}
The kernel $k$ is a probability density function which is symmetric about 0,
nondecreasing on $(-\infty,0]$ and nonincreasing on $[0,\infty)$, 
has a finite mode $k(0)<\infty$ and its second moment is strictly positive and finite.   
Thus, $\mu_0(k)=1$, $\mu_1(k)=0$, and $0<\mu_2(k)<\infty$.
\hpierre{nondecreasing on $(-\infty,0]$ and nonincreasing on $[0,\infty)$, have a finite mode $k(0)<\infty$,
a finite variance, are infinitely differentiable and have bounded derivatives of all orders.
In all our numerical experiments, we use the Gaussian kernel,
which is the standard normal density function $k(x) = \exp(-x^2/2) / \sqrt{2\pi}$.}
\hflorian{In the introduction we wrote positive and bounded \emph{even} moments. I think assuming this for the second moment is enough. In any case, we need to unify this!}
\end{assumption}

\begin{assumption}
 \label{ass:f}
The density $f$ is at least four times differentiable over $[a,b]$ (including at the boundaries)
and $R(f^{(r)})<\infty$ for $r\le 4$, where $f^{(r)}$ is the $r$th derivative of $f$.
\end{assumption}

\hpierre{In the rest of the paper, whenever we need it, we assume implicitly that 
$\mu_0(k^r) > 0$, $\mu_2(k) > 0$, and $R(f^{(r)}) > 0$, 
where $f^{(r)}$ is the $r$th derivative of the density $f$.}%
\hart{I wrote in the things that I think we need.  I think we don't go beyond $r=4$.
 Some additional minor remarks are made within comments in the LaTeX source.}

With MC, we have  
$\AIV = n^{-1} h^{-1} \mu_0(k^2)$  and  
$\AISB = (\mu_2(k))^2 R(f'') h^4/4$.
The AMISE is minimized by taking
$ h^5 = {Q}/{n}$ where $Q := {\mu_0(k^2)}/[{(\mu_2(k))^2 R(f'')}]$,
if $Q$ is well-defined and finite. This gives 
\[
  \AMISE = (5/4) Q^{-1/5} \mu_0(k^2) n^{-4/5}.
\]
Thus, finding a good $h$ amounts to finding a good approximation of $R(f'')$.
But since $f$ is precisely the unknown function that we want to estimate, 
this seems to be a circular problem.
However, perhaps surprisingly, a viable approach is to estimate $R(f'')$ 
by estimating $f''$ also via a KDE, integrating its square over $[a,b]$,
and plugging this estimate into the formula for the optimal $h$ \cite{tBER94a,tJON96a,tRAY06a,tSCO15a}.  
To do that, one needs to select a good $h$ to estimate $f''$ by a KDE. 
The asymptotically optimal $h$ depends in turn on $R(f^{(4)})$  
where $f^{(4)}$ is the fourth derivative of $f$.  
Then $R(f^{(4)})$ can be estimated by integrating 
the KDE of $f^{(4)}$ and this goes on ad infinitum.
In practice, one can select an integer $r_0 \ge 1$, get a rough estimate of $R(f^{(r_0+2)})$, 
and start from there.  One simple way of doing this is to pretend that $f$ is a normal density 
with a mean and variance equal to the sample mean $\hat\mu$ and variance $\hat\sigma^2$ of the data, 
and then compute $R(f^{(r_0+2)})$ for this normal density.
To estimate the $r$th derivative $f^{(r)}$,
one can take the sample derivative of the KDE with a smooth kernel $k$, yielding
\begin{equation}
\label{eq:kde-derivative}
  \hat f_n^{(r)}(x) \approx \frac{1}{n h^{r+1}} \sum _{i=0}^{n-1} k^{(r)}\left( \frac{x - X_{i}}{h} \right).
\end{equation}
The asymptotically optimal $h$ to use in this KDE is 
\begin{equation}
\label{eq:hRecursion}
  h_{*}^{(r)} = \left( \frac{ (2r+1) \mu_0((k^{(r)})^2)}{\mu_2^2(k)^2 R(f^{(r+2)})n} \right)^{1/(2r+5)}.
\end{equation}
\hpierre{Florian: I assume this is is what you had in mind when you wrote ``density functional of order $2(r_0+2)$''
  in section 2 of your report on the sum of normals.  Is that it?   But it is important to realize that
	the functional will depend on $a$ and $b$.  Can we give a formula for this $R(f^{(r_0+2)})$ as a 
	function of $r_0$, $\hat\mu$, $\hat\sigma^2$, $a$, and $b$?}%
\hflorian{Pierre: Yes, exactly. Density functional would even be wrong to use now over the interval $[a,b]$. I do only have a formula for $r_0\in\{1,2\}$ for now, but I see what I can do.}%
We will use this strategy to estimate a good $h$ in our experiments with MC and RQMC,
with a Gaussian kernel, with $r_0=2$.

In this paper we always take $h$ to be the same for all $x\in[a,b]$.
It is possible to improve the kernel density estimation by using a locally varying bandwidth $h(x)>0$.
For instance, it is advantageous to have a larger $h = h(x)$ where $f(x)$ is smaller.
The interested reader is referred to
\cite{tSCO15a,tTER92a}.
\hart{Let's not introduce all the notation to describe local bandwidths only to not use it.
That will tire out the readers.}

\section{Error and variance bounds for RQMC integration}
\label{sec:rqmc-int}

We recall classical error and variance bounds for RQMC integration.
They can be found in \cite{rDIC10a}, \cite{vLEC09f}, \cite{rNIE92b}, and \cite{vOWE97b}, for example.
We will use them to obtain bounds on the AIV for the KDE.

The integration error of $g : [0,1]^s\to\RR$ with 
the point set $P_n = \{\bu_1,\dots,\bu_{n}\}\subset [0,1]^s$ is
\[
  E_n = \frac{1}{n} \sum_{i=1}^{n} g(\bu_i) - \int_{[0,1]^s} g(\bu)\d\bu.
\]
Let $\frakv$ denote a subset of coordinates, $\frakv\subseteq \cS := \{1,\dots,s\}$.
For any $\bu = (u_1,\dots,u_s) \in[0,1]^s$ we denote by $\bu_{\frakv}$ the projection of $\bu$ on
the coordinates in $\frakv$ and by  $(\bu_\frakv,\bone)$ the point $\bu$ in which $u_j$ has been 
replaced by 1 for each $j\not\in \frakv$.  We write 
 $g_{\frakv} := \partial^{|\frakv|} g/\partial\bu_{\frakv}$ for the 
partial derivative of $g$ with respect to each of the coordinates in $\frakv$. 
When $g_{\frakv}$    
exists and is continuous for $\frakv = \cS$, 
the \emph{Hardy-Krause variation} of $g$ is 
\begin{equation}
\label{eq:HK}
  V_{\rm HK}(g) = \sum_{\emptyset\not=\frakv\subseteq\cS} \int_{[0,1]^{|\frakv|}} 
	   \left|g_{\frakv}(\bu_{\frakv},\bone)\right| \d\bu_{\frakv}.
\end{equation}
\hflorian{Rewrote this to add the definitions of $\bu_{\frakv}$, $(\bu_{\frakv},\bone)$ and the right definition of $V_{\rm HK}$}%
The \emph{star-discrepancy} of $P_n$ is 
\[
  D^*(P_n) = \sup_{\bu\in[0,1]^s} 
    \left|\vol[\bzero,\bu) - \frac{|P_n\cap [\bzero,\bu)|}{n}\right|,
\]
where $\vol[\bzero,\bu)$ is the volume of the box $[\bzero,\bu)$.
The \emph{Koksma-Hlawka inequality} states that
\begin{equation}
  |E_n| \le V_{\rm HK}(g) \cdot D^*(P_n).              \label{eq:kokla}
\end{equation}
Several known construction methods give $P_n$ with  
$D^*(P_n) = \cO((\log n)^{s-1} /n) = \cO(n^{-1 +\epsilon})$ for all $\epsilon > 0$.
They include lattice rules and digital nets.
Therefore, if $V_{\rm HK}(g) < \infty$, it is possible to achieve $|E_n| = \cO(n^{-1+\epsilon})$
for the worst-case error.  
It is also known how to randomize the points of these constructions so that for 
the randomized points, $\EE[E_n] = 0$ and 
\begin{equation}
  \Var[E_n] = \EE[E_n^2] = \cO(n^{-2+\epsilon}).       \label{eq:var-bound}
\end{equation}

\section{Bounding the convergence rate of the AIV for a KDE with RQMC}
\label{sec:rqmc}
\label{sec:rqmc-iv}

\hpierre{Make sure all the results have the correct conditions.}

\hpierre{-- Recall that Bias is the same as for MC.}

\hpierre{-- Recall how the variance is bounded for MC: One bound the variance for $n=1$, using the 
   change of variable, then divide by $n$.
   Does not work for RQMC because points are not independent.   
	 But would be ok if they are negatively dependent! }

\hpierre{-- First, prove KH bound for $s=1$ and $g$ increasing.  
   Then state and prove the general result.   If it makes things simpler, we could perhaps prove
	 it first for the monotone case, then explains the generalization, or perhaps give the general proof
	 (non-monotone) only in the supplement, to make the paper easier to read.}

Replacing MC by RQMC does not affect the bias, because $\hat f_n(x)$ has the same expectation for both, 
but it can change the variance.  Before trying to bound the variance under RQMC, it is instructive to
recall how it is bounded under MC.  
To compute (or bound) the IV over an interval $[a,b]$, we compute (or bound)
$\Var[\hat f_n(x)]$ at an arbitrary point $x\in [a,b]$ and integrate this bound over $x$.
Since $\hat f_n(x)$ is an average of $n$ independent realizations of 
$Y(x) = k((x - X)/h) / h$, it suffices to compute $\Var[Y(x)]$ and we have $\Var[\hat f_n(x)] = \Var[Y(x)]/n$.
With the change of variable $w = (x-v)/h$, we obtain \cite[page 143]{tSCO15a}:
\begin{eqnarray*}
  \Var[Y(x)] &=& \frac{1}{h^2} \int_{-\infty}^\infty  k^2\left(\frac{x-v}{h}\right) f(v)\d v  - \EE^2[Y(x)] \\
	        &=& \frac{1}{h} \int_{-\infty}^\infty k^2(w) f(x-hw)\d w  - \EE^2[Y(x)]
	        = \frac{f(x)}{h} \int_{-\infty}^\infty k^2(w) \d w \, - f^2(x) + \cO(h).
\end{eqnarray*}
Integrating over $x\in [a,b]$,
gives $\IV = p_0 \,\mu_0(k^2)/(nh) - R(f)/n + \cO(h/n)$ where
$p_0 = \int_a^b f(x) \d x \le 1$.  

With RQMC, this also holds for a single RQMC point $\bU_i$ and $X = X_i = g(\bU_i)$,
but to obtain $\Var[\hat f_n(x)]$, we can no longer just divide $\Var[Y(x)]$ by $n$,
because the $n$ realizations of $Y(x)$ are not independent.
RQMC is effective if and only if these realizations are negatively correlated, 
in the sense that if $Y_i = h^{-1} k((x - X_i)/h)$,
then $\sum_{i\not=j} \Cov(Y_i, Y_j) \le 0$.
This would imply that RQMC can never be worse than MC, but this seems hard to prove.

We now take a different path, in which we examine how the KH inequality \cref{eq:kokla} 
can be used to bound $\Var[\hat f_n(x)]$.
With $X = g(\bU)$, we can write
\begin{equation}
 \hat f_n(x)  
        = \frac{1}{n} \sum_{i=1}^n  \tilde g(x,\bU_i)   
 \quad\mbox{ where }\quad
 {\tilde g(x,\bU_i)} := Y_i = \frac{1}{h} k\left(\frac{x-g(\bU_i)}{h}\right).
									\label{eq:kde2}
\end{equation}
Thus, $\hat f_n(x)$ can be interpreted as an RQMC estimator of 
$\EE[\tilde g(x,\bU)] = \int_{[0,1]^s} \tilde g(x,\bu)\d\bu$.
To apply the bound in \cref{eq:var-bound} to this estimator,
we need to bound the variation of $\tilde g(x,\cdot)$, by bounding each term of the sum in \cref{eq:HK}.

To provide insight, we first examine the special case where $s=1$ and $g$ is nondecreasing over $[0,1]$,
under \Cref{ass:kGeneral}.
Then $\tilde g(x,u) = k((x-g(u))/h)/h$ is nonincreasing over the $u$
with $g(u)\le x$ and nondecreasing over $u$ with $g(u)\ge x$. In that case
$V_{\rm HK}(\tilde g(x,\cdot))$ is the ordinary one-dimensional total variation, and then
\begin{equation}
  V_{\rm HK}(\tilde g(x,\cdot))  \le 
  \left|\frac1hk(0)-\frac1hk\left(\frac{x-g(0)}{h}\right)\right|
 +\left|\frac1hk(0)-\frac1hk\left(\frac{x-g(1)}{h}\right)\right|
 \le \frac{2k(0)}h.
 \label{eq:VHK1}
\end{equation}
The same bound holds for nonincreasing functions $g$. 
More generally, if $g$ is monotone within each of $M$ intervals that partition the domain $[0,1]$
then  $ V_{\rm HK}(\tilde g(x,\cdot))  \le 2Mk(0)/h$. 
\hpierre{No additional condition needed?}%
The factor of $2$ is necessary because
$k$ is potentially increasing and then decreasing within each of those intervals.
Note that there are one-dimensional point sets $P_n$ with $D^*(P_n) \le 1/n$.
With such point sets, we obtain  
$\Var[\hat f_n(x)] \le (2M k(0))^2 / (nh)^2$, so $\AIV = \cO((nh)^{-2})$.
With $h = \Theta(n^{-1/3})$, this gives $\AMISE = \cO(n^{-4/3})$.

We now consider the general case $s \ge 1$. To bound $V_{\rm HK}(\tilde g(x,\cdot))$ we will make
a similar change of variables as for the IV under MC. 
We need additional assumptions on $k$ and $g$.

\begin{assumption}
\label{ass:kernel}
The kernel function $k:\RR\to\RR$ is $s$ times continuously differentiable and its derivatives 
up to order $s$ are integrable and uniformly bounded over $\RR$.
\hflorian{Added the integrability assumption}
\end{assumption}

\begin{assumption}
\label{ass:g}
Let $g:[0,1]^{s}\to\RR$ be piecewise monotone in each coordinate $u_j$ when the other coordinates are fixed,
with a number of monotone pieces (which is 1 plus the number of times that the function switches 
from strictly decreasing to strictly increasing or vice-versa, in $u_j$)
that is bounded uniformly in $\bu$ by an integer $M_j$.
We also assume that the first-order partial derivatives of $g$ are continuous and that
$\|g_{\frakv}\|_{\infty}<\infty$ for all $\frakv\subseteq\cS$.
This implies that any product of partial derivatives of $g$ of order at most one in each variable is integrable.
\hflorian{Changed the norm type in this assumption from $L_1$ to $L_{\infty}$. Now, we also do not need to bound the norm of the product of derivatives, as this is already guaranteed now.}%
\end{assumption}
\hflorian{Actually we need the integrability condition only for collections of disjoint index sets $\mathfrak w_1, \dots,\mathfrak w_\ell$ such that they either partition $\cS$ or $|\bigcup_{l=1}^{\ell}\mathfrak{w}_l| = s-1 $.  Should we keep it as is for the sake of readability, or is it worth stating the weaker condition?}
\hflorian{The proof of \cref{prop:vhk} (and 
  \cref{cor:miseHK}) requires slightly weaker conditions on $g$ than those 
  of \cref{ass:g}. We need the integrability condition \cref{eq:gIntegrability}
only for collections of disjoint index sets $\mathfrak w_1,\dots,\mathfrak w_\ell$ such
that they either partition $\cS$ or $|\bigcup_{l=1}^{\ell}\mathfrak{w}_l| = s-1 $.}
\hflorian{I took this condition as it 1) is weaker and 2) comes in more naturally than what we had before, i.e. $|g_{\frak{w}}|\leq B_{\frak{w}}<\infty$. In fact, we had $|g_{\{j\}}|\leq B_{j}<\infty$, but the proof shows that we need it for mixed derivatives too.}
\hflorian{This will be used in the definition of $c_j$ below as well as in the proof of Proposition~\ref{prop:vhk}. I thought it might by a good idea to put this here\ldots}

Because the Hardy--Krause variation \cref{eq:HK} involves mixed partial derivatives of $\tilde g(x,\cdot)$ 
of order up to $s$, things unfortunately become considerably more complicated than for MC. Roughly 
speaking, every derivative causes an additional factor $h^{-1}$, while we may dispose of 
only one such factor through a change of variables. This is reflected in \cref{prop:vhk} below.
Similar to the one-dimensional case, we need to take into account how often $g$ changes 
its monotonicity direction, and this is captured by the $M_j$'s.
%
For each $j\in\cS$, let 
\begin{eqnarray*}
  G_j &=& \left\| \prod_{\ell\in \cS\setminus \{j\}} g_{\{\ell\}}\right\|_{\infty}, \nonumber\\ 
  c_j &=& M_{j} \cdot\left(\|k^{(s)}\|_1 \cdot G_j + \II(s=2) \cdot  \|k^{(s)}\|_{\infty} \cdot \|g_{\{1,2\}}\|_{1}\right) < \infty,
\end{eqnarray*}
where $\II(\cdot)$ is the indicator function, so the expression for $c_j$ contains an extra term when $s=2$.
The source of this extra term is that for $s=2$, the only partition of $\{1,2\}$ 
which contains no singletons is $\{1,2\}$ itself, and it gives a term in $h^{-2} = h^{-s}$,
whereas for $s > 2$, all the extra terms are $\cO(h^{-s+1})$.
Our main result of this section is:

\begin{proposition}
\label{prop:vhk}
 Let $k$ and $g$ satisfy \Cref{ass:kGeneral,ass:kernel,ass:g}, and $c = \min_{j\in\cS} c_j$.
Then the Hardy-Krause variation of 
 $\tilde{g}(x,\bu) = h^{-1}k((x-g(\bu))h^{-1})$ (as a function of $\bu$) satisfies
\begin{equation}
\label{eq:bound-vhk}
 V_{\rm HK}(\tilde{g}(x,\cdot)) \leq  c h^{-s} + \cO(h^{-s+1}).
\end{equation}
\hflorian{I hate this exception for $s=2$, especially because the resulting IV bound is only better than with MC for $s\leq3$. On the other hand, it only affects the constant. A workaround is not really obvious to me, since the mixed derivative $g_{\{1,2\}}$ wouldn't cancel after the change of variables.}
\hflorian{The need for the kernel to be the gaussian might be obsolete, see further below.}
\hflorian{I would like to include the implied constant in this proposition but in its current form it is too tedious to explain it right here. For now, I have put the constant in the separate Remark~\ref{rem:vhkConstant} after the proof. If we do not need the subdivision of $\RR$ w.r.t. the zeros of $k^{(s)}$ (see below), it will be much easier.}
\end{proposition}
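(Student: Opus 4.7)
The plan is to expand $\tilde g_{\frakv}:=\partial^{|\frakv|}\tilde g(x,\cdot)/\partial\bu_{\frakv}$ via the multivariate Fa\`a di Bruno formula applied to $k\circ\psi$, where $\psi(\bu):=(x-g(\bu))/h$, identify the single term of leading order in $h^{-1}$, bound it by one change of variables, and show that every other contribution is $\cO(h^{-s+1})$. Using $\partial^{|B|}\psi/\partial\bu_{B}=-h^{-1}g_{B}$ for each block $B$, the formula yields
\[
\tilde g_{\frakv}(x,\bu)=h^{-1}\sum_{\pi\in P(\frakv)}(-h)^{-|\pi|}\,k^{(|\pi|)}(\psi(\bu))\prod_{B\in\pi}g_{B}(\bu),
\]
summed over set partitions $\pi$ of $\frakv$. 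The maximal power $h^{-1-|\frakv|}$ comes from the finest partition (all singletons), contributing $(-h)^{-1-|\frakv|}\,k^{(|\frakv|)}(\psi)\prod_{\ell\in\frakv}g_{\{\ell\}}$; coarser $\pi$ carry strictly fewer factors of $h^{-1}$.

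For the leading term with $\frakv=\cS$, I fix any $j\in\cS$ and analyse $\int_{[0,1]^{s}}|k^{(s)}(\psi)|\prod_{\ell=1}^{s}|g_{\{\ell\}}|\,\d\bu$. By \Cref{ass:g}, $u_{j}\mapsto g(u_{j},\bu_{-j})$ has at most $M_{j}$ monotone pieces; on each piece, change variables $w=\psi$, so $\d u_{j}=-h/g_{\{j\}}\,\d w$. The Jacobian cancels $|g_{\{j\}}|$ and produces a factor $h$, leaving $\int_{\RR}|k^{(s)}(w)|\,\d w=\|k^{(s)}\|_{1}$; bounding $\prod_{\ell\ne j}|g_{\{\ell\}}|\le G_{j}$ and summing over monotone pieces, the integral is at most $M_{j}h\|k^{(s)}\|_{1}G_{j}$. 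Multiplying by the prefactor $h^{-1-s}$ and minimising over $j$ yields the contribution $h^{-s}\min_{j}M_{j}\|k^{(s)}\|_{1}G_{j}$. For $\emptyset\ne\frakv\subsetneq\cS$, the same substitution (on any $j\in\frakv$, valid because piecewise monotonicity in $u_{j}$ is inherited when the remaining coordinates are fixed to~$1$) yields a contribution of order $h^{-|\frakv|}\le h^{-(s-1)}$, hence $\cO(h^{-s+1})$.

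It remains to handle the non-finest partitions in the case $\frakv=\cS$. For $|\pi|=r\le s-1$, the prefactor is $h^{-1-r}$, so whenever $r\le s-2$ one automatically obtains $\cO(h^{-s+1})$ after bounding $\|k^{(r)}\|_{\infty}$ and each $g_{B}$ in $L^{\infty}$ via \Cref{ass:kernel,ass:g}. This leaves $r=s-1$, corresponding to one two-element block and $s-2$ singletons. For $s\ge 3$ there is still a singleton block, so a change of variables on it buys back a factor $h$ and again produces $\cO(h^{-s+1})$. For $s=2$ the only partition with $r=1$ is $\pi=\{\{1,2\}\}$, which has no singleton; a direct bound, pulling $|k^{(s-1)}(\psi)|$ out in sup-norm and integrating $|g_{\{1,2\}}|$ in $L^{1}$, produces a contribution of the same leading order $h^{-s}$, which is precisely the indicator correction in the definition of $c_{j}$. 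Summing all contributions over $\frakv$ and $\pi$ and minimising over $j$ completes the proof of \cref{eq:bound-vhk}.

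The main obstacle will be the uniform accounting of the Fa\`a di Bruno expansion: one must verify that \emph{every} pair $(\frakv,\pi)$ outside the distinguished finest partition on $\cS$ (and outside the exceptional $s=2$ coarse partition) falls into the subleading bucket, without overlooking how the $s=2$ case shares the leading order. A secondary technical point is the change of variables, which requires partitioning $[0,1]$ into the (at most $M_{j}$) open intervals of strict monotonicity of $g(\cdot,\bu_{-j})$ and checking that the boundary points (where $g_{\{j\}}=0$) contribute nothing; the continuity of the first-order partials from \Cref{ass:g} ensures that these boundary sets are negligible.
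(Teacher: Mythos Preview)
Your argument is correct and follows essentially the same route as the paper: expand $\tilde g_\frakv$ by the multivariate Fa\`a di Bruno formula, recover one power of $h$ via the change of variables $w=(x-g)/h$ whenever the partition contains a singleton (this is exactly the paper's Lemma~\ref{lem:cov}), and bound the singleton-free partitions directly, with the $s=2$ coarse partition $\{\{1,2\}\}$ surviving at leading order as the indicator term in $c_j$. The only cosmetic difference is that the paper organises the sum by the dichotomy $\Pi_1(\frakv)$ versus $\Pi(\frakv)\setminus\Pi_1(\frakv)$ and then invokes $|P|\le\lfloor|\frakv|/2\rfloor$ for the latter, whereas you organise by the power of $h^{-1}$ and treat the case $|\pi|=s-1$ for $s\ge3$ separately; both bookkeeping schemes lead to the same bound.
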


Note that the constants $c_j$ depend on $g$ via $M_j$ and $G_j$ (plus an extra term when $s=2$).
A large $M_j$ means that $g$ changes the sign of its slope many times in the direction of coordinate $j$.
A large $G_j$ means that the product of the slopes in the directions of the other coordinates 
can attain large values.
\hflorian{Changed this one from ``has a large absolute value on average'' to ``can attain large values''.}%
When we have both, then $c_j$ is large. 
Observe that the bound \cref{eq:bound-vhk} uses the \emph{smallest} $c_j$.
In case $g$ is constant with respect to one coordinate $\ell\not=j$, then $G_j=0$, $c_j=0$, and $c=0$.
That is, the term in $h^{-s}$ disappears from \cref{eq:bound-vhk} and the bound becomes $\cO(h^{-s+1})$.
This agrees with the fact that $g$ is then effectively a $(s-1)$-dimensional function. 
Likewise, if $g$ is almost constant (has very little variation) with respect 
to one or more coordinate(s) $\ell\not=j$, then $G_j$ and therefore $c_j$ will be small, 
unless the other terms in the product are very large.
Before proving this proposition, we state a corollary that bounds the AIV and the AMISE rates under RQMC.

\begin{corollary}
	\label{cor:miseHK}
  Let $k$ and $g$ satisfy \Cref{ass:kGeneral,ass:f,ass:kernel,ass:g}. For a KDE with 
  kernel $k$, with the underlying observations obtained via sets $P_n$ of $n$ RQMC points for which
  $D^{*}(P_n)=\cO(n^{-1+\epsilon})$ for all $\epsilon > 0$ when $n\to\infty$, 
  by combining \cref{eq:bound-vhk} with \cref{eq:kokla} and squaring, we find that 
  $\AIV = \cO(n^{-2+{\epsilon}} h^{-2s})$ for all ${\epsilon}>0$.  
	Then, by taking $h = \Theta(n^{-1/(2+s)})$, \hpierre{Removed the $+\epsilon/4$, not needed.}%
  we obtain that $\AMISE = \cO(n^{-4/(2+s) + \epsilon})$ for all $\epsilon>0$. 
	The exponent of $n$ in this AMISE bound beats the MC rate for $s<3$ and is almost equal to the MC rate for $s=3$.
\hflorian{We need to clearly state a unified assumption for $k$ and $f$ somewhere. $k$ is always implicitly assumed to be a pdf with $0<\mu_2(k)<\infty$ and $f$ should fulfill $R(f'')<\infty$. The last two conditions are necessary for $\AISB = \cO(h^{4})$.}
\end{corollary}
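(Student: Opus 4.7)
The plan is to treat $\hat f_n(x)$ for each fixed $x\in[a,b]$ as an RQMC estimator of $\EE[\tilde g(x,\bU)]$, bound its variance via \cref{prop:vhk} and the RQMC variance bound \cref{eq:var-bound}, then integrate over $x$ and optimize over $h$.

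First I would fix an arbitrary $x\in[a,b]$ and use the representation \cref{eq:kde2}. Since RQMC randomization makes $\EE[\hat f_n(x)] = \EE[\tilde g(x,\bU)]$, we have $\Var[\hat f_n(x)] = \EE[E_n(x)^2]$ where $E_n(x) = \hat f_n(x) - \EE[\tilde g(x,\bU)]$. Under our hypotheses on $k$ and $g$, \Cref{ass:kernel,ass:g} imply $V_{\rm HK}(\tilde g(x,\cdot)) < \infty$, so the Koksma--Hlawka inequality \cref{eq:kokla} applies pointwise in $\omega$ and yields
\[
  |E_n(x)|^2 \le V_{\rm HK}^2(\tilde g(x,\cdot)) \cdot D^*(P_n)^2.
\]
Using the hypothesis $D^*(P_n) = \cO(n^{-1+\epsilon})$ and \cref{prop:vhk}, we get, uniformly in $x\in[a,b]$,
\[
  \Var[\hat f_n(x)] \le \bigl(ch^{-s} + \cO(h^{-s+1})\bigr)^2 \cdot \cO(n^{-2+\epsilon})
  = \cO\bigl(n^{-2+\epsilon} h^{-2s}\bigr).
\]
Integrating this bound over $[a,b]$ gives $\AIV = \cO(n^{-2+\epsilon} h^{-2s})$, which is the first claim.

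Next, since RQMC preserves the expectation of $\hat f_n(x)$, the bias (and hence the ISB) is identical to the MC case, so by \Cref{ass:kGeneral,ass:f} we still have $\AISB = (\mu_2(k))^2 R(f'') h^4/4 + o(h^4) = \cO(h^4)$. Combining the two pieces,
\[
  \AMISE = \AIV + \AISB = \cO(n^{-2+\epsilon} h^{-2s}) + \cO(h^4).
\]
I would then optimize $h$ by balancing the two terms: setting $n^{-2+\epsilon} h^{-2s} \asymp h^4$ gives $h^{2s+4} \asymp n^{-2+\epsilon}$, i.e.\ $h = \Theta(n^{-1/(s+2)})$ (the $\epsilon$ can be absorbed). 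Substituting this choice yields $h^4 = \Theta(n^{-4/(s+2)})$ and $n^{-2+\epsilon} h^{-2s} = \Theta(n^{-2+\epsilon+2s/(s+2)}) = \Theta(n^{-4/(s+2)+\epsilon})$, hence $\AMISE = \cO(n^{-4/(s+2)+\epsilon})$.

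Finally, comparing exponents: $-4/(s+2) < -4/5$ iff $s+2 < 5$ iff $s < 3$, giving a strictly better rate than MC for $s \in \{1,2\}$, and the same exponent $-4/5$ (up to the arbitrarily small $\epsilon$) when $s=3$. The only subtle step is justifying that the pointwise-in-$x$ squared KH bound can be carried inside the expectation; this is where the randomization properties of the RQMC construction (yielding the variance bound \cref{eq:var-bound}) are used. Everything else is a straightforward plug-in of \cref{prop:vhk}, an integration in $x$, and a bias--variance balance, so no serious obstacle is anticipated beyond ensuring the implicit constants in $\cO(\cdot)$ do not hide $h$-dependence, which they do not since the bound in \cref{prop:vhk} already exhibits the full $h$-dependence explicitly.
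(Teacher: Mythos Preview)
Your proposal is correct and follows essentially the same approach as the paper. The corollary is presented in the paper as an immediate consequence of squaring the Koksma--Hlawka inequality \cref{eq:kokla}, plugging in the variation bound \cref{eq:bound-vhk} from \cref{prop:vhk}, integrating over $[a,b]$, adding the unchanged $\AISB=\cO(h^4)$, and balancing the two terms; no separate proof is given beyond what is stated in the corollary itself, and your write-up simply spells out those steps.
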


Let $\Pi(\frakv)$ denote the set of all partitions of a set of coordinate indices $\frakv\subseteq\cS$,
and let $\Pi_1(\frakv)$ denote the subset of all partitions that contain at least one singleton.
For each partition $P \in \Pi_1(\frakv)$, we select a particular singleton and denote it by $\{j(P)\}$. 
Removing that singleton from $P$ yields a partition of $\frakv^* = \frakv\setminus\{j(P)\}$ which we denote by $P^*$. 
\hflorian{Included the definition of $\frakv^*$.}
\hflorian{Unfortunately, I do not see a way to completely dispose of the symbol $j_P$, as we still need it for $\ind(g,j_P)$ too. Simply taking $\max_j\ind(g,j)$ does not work, as the $L_1$ norm depends on $j$ too. 
Everything else that I have tried so far results in a pile of additional notation. I'll keep thinking about it...}

The proof of the proposition will use the following lemma, which describes when 
the aforementioned change of variable works and how it removes a factor $1/h$ from the bound.

\begin{lemma}
  \label{lem:cov}
  Let \Cref{ass:kGeneral,ass:kernel,ass:g} hold, let $h>0$, $\frakv\subseteq\cS$, 
	and  $P\in\Pi_1(\frakv)$. Then 
 \begin{eqnarray*}
    \int_{[0,1]^{|\frakv|}} \left| k^{(|P|)}\left( \frac{x-g(\bu_{\frakv},\bone)}{h} \right) 
		    \cdot \prod_{\mathfrak w\in P} g_{\mathfrak w}(\bu_{\frakv},\bone)\right|\d\bu_{\frakv} 
		&~\leq~&  h \cdot M_{j(P)} \cdot\left\|\prod_{\mathfrak{w}\in P^*} g_{\mathfrak w}\right\|_{\infty}
		    \cdot \|k^{(|P|)}\|_{1}.
 \end{eqnarray*}
\end{lemma}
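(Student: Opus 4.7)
The plan is to exploit the singleton $\{j(P)\}\in P$ guaranteed by $P\in\Pi_1(\frakv)$ and absorb the factor $g_{\{j(P)\}}$ into the Jacobian of a one-dimensional change of variables $w=(x-g(\bu_{\frakv},\bone))/h$ in the coordinate $u_{j(P)}$. This substitution is the device that converts one factor of $h^{-1}$ hidden inside $k^{(|P|)}((x-g)/h)$ into a clean factor of $h$ after integration, at the price of the constant $M_{j(P)}$ arising from the number of monotone pieces. All remaining derivative factors $g_{\mathfrak w}$ with $\mathfrak w\in P^*$ are simply pulled out in sup norm, which is legitimate by \Cref{ass:g}.

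Concretely, I would split the product as $\prod_{\mathfrak w\in P}g_{\mathfrak w}=g_{\{j(P)\}}\cdot\prod_{\mathfrak w\in P^*}g_{\mathfrak w}$ and use the uniform bound
\[
\prod_{\mathfrak w\in P^*}\bigl|g_{\mathfrak w}(\bu_{\frakv},\bone)\bigr|\ \le\ \left\|\prod_{\mathfrak w\in P^*}g_{\mathfrak w}\right\|_{\infty}.
\]
Then, by Fubini, I would perform the $u_{j(P)}$-integral innermost while fixing all other coordinates of $\frakv$. With those coordinates fixed, $g(\bu_{\frakv},\bone)$ is, by \Cref{ass:g}, piecewise monotone in $u_{j(P)}$ with at most $M_{j(P)}$ monotone pieces. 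Partition $[0,1]$ accordingly and, on each piece, apply the substitution $w=(x-g(\bu_{\frakv},\bone))/h$; since $g_{\{j(P)\}}$ has constant sign on the piece, the Jacobian identity $|g_{\{j(P)\}}|\,du_{j(P)}=h\,|dw|$ holds, so the contribution of the piece is at most $h\,\|k^{(|P|)}\|_1$ (finite by \Cref{ass:kernel}, after extending the $w$-integral to all of $\RR$). Summing over the pieces gives the bound $h\cdot M_{j(P)}\,\|k^{(|P|)}\|_1$ on the inner integral, and a final integration of the resulting constant bound over the remaining $|\frakv|-1$ coordinates in $[0,1]$ contributes a factor $1$, yielding the claim.

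The main technical point to get right is justifying the one-dimensional substitution rigorously on each monotone piece. This requires $g$ to be \emph{strictly} monotone with continuous $g_{\{j(P)\}}$ on each piece, so that $u_{j(P)}\mapsto w$ is a $C^1$ bijection of the piece onto its image with a derivative that vanishes, at worst, at the endpoints of the piece, which form a null set and are harmless inside an absolute-value integral. This is precisely what \Cref{ass:g} delivers (the piecewise monotonicity together with continuity of first-order partials). Once this is in place, the rest of the argument is a direct application of Fubini and a single substitution, and the bound is essentially tight given the chosen $j(P)$ and the sup norm estimate on $\prod_{\mathfrak w\in P^*}g_{\mathfrak w}$.
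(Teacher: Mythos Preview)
Your proposal is correct and follows essentially the same route as the paper: isolate the singleton $\{j(P)\}$, integrate first in $u_{j(P)}$ via the substitution $w=(x-g)/h$ on each monotone piece so that $|g_{\{j(P)\}}|\,du_{j(P)}=h\,|dw|$, extend the $w$-integral to $\RR$, multiply by $M_{j(P)}$, and pull out $\prod_{\mathfrak w\in P^*}g_{\mathfrak w}$ in sup norm. The only cosmetic difference is that the paper explicitly separates off the subset of $[0,1]$ on which $g(\cdot)$ is constant in $u_{j(P)}$ (where $g_{\{j(P)\}}=0$ and the integrand vanishes) before applying the substitution on the remaining strictly monotone pieces; \Cref{ass:g} does not guarantee strict monotonicity on each piece, contrary to what you assert, but the fix is exactly this observation that constant stretches contribute nothing.
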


\begin{proof}
We assume without loss of generality that $1\in \frakv$ and $j(P)=1$. 
We make the change of variables
\begin{equation}
 \label{eqn:cov}
 u_{1}\mapsto w = (x-g(\bu_{\frakv},\bone))/{h}.
\end{equation}
For any $\bu_{\frakv}\in[0,1]^{|\frakv|}$ with fixed $\bu_{\frakv^*}\in[0,1]^{|\frakv|-1}$, we partition $[0,1]$ into a part 
$\mathcal N(\bu_{\frakv^*})$ where $g(\bu_{\frakv},\bone)$ is constant in $u_{1}$ and into sets 
$\mathcal D_l(\bu_{\frakv^*})$, $1\leq l \leq L(\bu_{\frakv^*})\leq M_1$, on which it is either strictly
decreasing or strictly increasing in $u_{1}$. 
Since $g_{\{1\}}$ is continuous by assumption, each 
of these sets is measurable. The restriction of $g_{\{1\}}$ to $\mathcal{N}(\bu_{\frakv^*})$ equals 0 identically.
In all the other sets $\mathcal{D}_l(\bu_{\frakv^*})$ we apply the change of variables  
\cref{eqn:cov}.
Considering this in the left-hand side of the claim leads to
\begin{align*}
     \int_{[0,1]^{|\frakv|}}  & \left| k^{(|P|)} 
	    \left( \frac{x-g(\bu_{\frakv},\bone)}{h} \right) 
	   \prod_{\mathfrak w\in P} g_{\mathfrak w}(\bu_{\frakv},\bone)\right|\d\bu_{\frakv} \\
   & = \
  \int_{[0,1]^{|\frakv|-1}} \sum_{l = 1}^{L(\bu_{\frakv^*})} \int_{\mathcal{D}_l(\bu_{\frakv^*})}
	 \left|k^{(|P|)}\left( \frac{x-g(\bu_{\frakv},\bone)}{h} \right) \prod_{\mathfrak w\in P} 
	       g_{\mathfrak w}(\bu_{\frakv},\bone)\right| \d u_1 \d\bu_{\frakv^*} \\
   & \leq \ h
    \int_{[0,1]^{|\frakv|-1}} 
    L(\bu_{\frakv^*}) \int_{-\infty}^{\infty} \left|k^{(|P|)}(w) 
		   \prod_{\mathfrak w\in P^*}g_{\mathfrak w}(\bu_{\frakv},\bone)\right| \d w  \d \bu_{\frakv^*} \\  
   & \leq \ h \cdot M_1 \cdot \|k^{(|P|)}\|_{1} \cdot  
	   \left\| \prod_{\mathfrak w\in P^*} g_{\mathfrak w}\right\|_{\infty},
\end{align*}
 where we used H{\"o}lder's inequality in the last step.
\end{proof}

\begin{proof}[Proof of \cref{prop:vhk}]
 We rewrite each summand w.r.t. $\frakv\subseteq\cS$ in \cref{eq:HK} with the help of Fa{\`a} di 
 Bruno's formula (see \cite[Proposition 1]{mHAR06a}) as follows
{\small\begin{align}
 \int_{[0,1]^{|\frakv|}}\left|
 \tilde{g}_{\frakv}(x,\bu_{\frakv},\bone)
 \right|\d\bu_{\frakv}
 &=  \small
 \frac{1}{h}\int_{[0,1]^{|\frakv|}}\Bigg|  \sum_{P\in\Pi( \frakv)} k^{(|P|)}\left( \frac{x-g(\bu_{\frakv},\bone)}{h} \right) \prod_{\mathfrak{w}\in P}\frac{\partial^{|\mathfrak{w}|}}{\partial\bu_{\mathfrak{w}}}\left( \frac{x-g(\bu_{\frakv},\bone)}{h} \right)\Bigg| \d\bu_{\frakv}
 \nonumber\\
 &\leq 
 \sum_{P\in\Pi( \frakv)} \frac{1}{h^{|P|+1}} \int_{[0,1]^{|\frakv|}}\Bigg| k^{(|P|)}\left( \frac{x-g(\bu_{\frakv},\bone)}{h} \right) \prod_{\mathfrak{w}\in P} g_{\frak{w}}(\bu_{\frakv},\bone)\Bigg|\d\bu_{\frakv}.
\label{eqn:faaDiBruno}
\end{align}}%
If $P\in\Pi_1(\frakv)$,  
we bound the corresponding summand in \cref{eqn:faaDiBruno} via \cref{lem:cov}. 
If $P\not\in\Pi_1(\frakv)$, we apply H{\"o}lder's inequality to obtain the upper bound
\begin{equation*}
  \frac{1}{h^{|P|+1}} \cdot\|k^{(|P|)}\|_{\infty} \cdot \Big\|\prod_{\mathfrak{w}\in P} g_{\frak{w}} \Big\|_{1}.
\end{equation*}
Furthermore, we observe that each element of $P\in\Pi(\frakv)\setminus\Pi_1(\frakv)$ has a cardinality of at least 2. Therefore, $P$ can contain at most $\lfloor|\frakv|/2\rfloor$ elements.
\hflorian{Included this sentence following Art's request.}%
This gives the following bound on $V_{\rm HK}(\tilde g(x,\cdot))$, which holds for any $j\in\cS$:
{\small\begin{eqnarray*}
 V_{\rm HK}(\tilde g(x,\cdot))  
 &\leq&
 \sum_{\emptyset\neq \frakv \subseteq\cS}
 \left[ \sum_{ P\in\Pi_1(\frakv) } \hskip-5pt \frac{M_{j(P)}}{h^{|P|}}\|k^{(|P|)}\|_1 \cdot \Big\|\prod_{\mathfrak{w}\in P^*} g_{\frak{w}} \Big\|_{\infty}+
 \hskip-10pt\sum_{P\in\Pi(\frakv)\setminus\Pi_1(\frakv) } 
 \hskip-5pt \frac{1}{h^{|P|+1}} {\|k^{(|P|)}\|_{\infty} } \cdot \Big\|\prod_{\mathfrak{w}\in P} g_{\frak{w}} \Big\|_{1}\right] \\
 &\leq& \ h^{-s} M_j G_j ~\|k^{(s)}\|_{\infty}+ \cO(h^{-s+1}) 
  \ + \sum_{\emptyset\neq \frakv \subseteq\cS} h^{-\lfloor|\frakv|/2\rfloor-1} 
	\hskip-10pt \sum_{P\in\Pi(\frakv)\setminus\Pi_1(\frakv) }  \|k^{(|P|)}\|_{\infty} 
	\cdot \Big\|\prod_{\mathfrak{w}\in P} g_{\frak{w}} \Big\|_{1}.
\end{eqnarray*}}%
For $s=1$ this already proves the claim. 
For $s=2$, the only partition of $\cS$ that
contains no singleton is $\cS$ itself and the result follows. 
For $s\geq3$ we have $\lfloor s/2\rfloor+1\leq s-1$, and then
\[  \sum_{\emptyset\neq \frakv \subseteq\cS} h^{-\lfloor|\frakv|/2\rfloor-1} \sum_{P\in\Pi(\frakv)\setminus\Pi_1(\frakv)}  \|k^{(|P|)}\|_{\infty} \cdot \Big\|\prod_{\mathfrak{w}\in P} g_{\frak{w}} \Big\|_{1} = \cO(h^{-s+1}).
\]
\end{proof}

The bound of \cref{prop:vhk} suggests that the IV could converge at a much worse rate 
with RQMC than with MC when $s$ is large. 
However, the next proposition, based on a result of \cite{vOWE98b}, provides a different bound
that does not grow as $h^{-2s}$ when $h$ decreases, for a particular type of RQMC point set,
namely a $(t,m,s)$-net in base $2$ randomized by a nested uniform scramble (NUS).
This type of point set contains $2^m$ points in $s$ dimensions, the $t$ parameter measures the uniformity
in some sense (the smaller the better) \cite{rDIC10a,rNIE92b}, 
and the NUS shuffles the points in some particular way \cite{vOWE95a,vOWE97b}.
\hpierre{Here we use $b$ to denote both the base of a digital net and the right boundary of the interval 
 $[a,b]$, because these notations are standard, and it is easy to distinguish which is which by the context.}
We state the following result for base $b=2$, but it can be extended to a general prime base $b \ge 2$.

\begin{proposition}
\label{prop:owen98}
Let $P_n$ be a $(t,m,s)$-net in base $2$ randomized by NUS, and let \cref{ass:kGeneral} hold.
Then the IV of $\hat{f}_n$ satisfies
\[
  \IV \leq 2^t 3^s \mu_0(k^2)/(nh) + \cO(h/n). 
\]
Moreover for any fixed $s\ge 1$, there is a fixed $t \ge 0$ for which we know how to 
construct a $(t,m,s)$-net $P_n$ in base $2$ for any integer $m \ge 1$. 
By using such a sequence of nets with NUS, we get $\IV = \cO(1/(nh))$,
and then by taking $h = \Theta(n^{-1/5})$, we obtain $\MISE = \cO(n^{-4/5})$.
That is, the MISE never converges at a worse rate than with plain MC.
\end{proposition}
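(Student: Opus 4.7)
The plan is to exploit the general NUS variance bound of \cite{vOWE97b} for $(t,m,s)$-nets in base $2$, which states that for any square-integrable integrand $\varphi:[0,1]^s\to\RR$, the RQMC estimator $\hat I_n = n^{-1}\sum_i \varphi(\bU_i)$ based on a NUS-scrambled $(t,m,s)$-net satisfies
\[
  \Var[\hat I_n] \leq 2^t \cdot 3^s \cdot \sigma^2(\varphi)/n,
\]
where $\sigma^2(\varphi) = \Var[\varphi(\bU)]$ under $\bU\sim U[0,1]^s$, i.e., under plain MC.

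I would first fix $x\in[a,b]$ and write the KDE in the form of \cref{eq:kde2}, so that $\hat f_n(x)$ is an RQMC estimator of $\int \tilde g(x,\bu)\d\bu$ with integrand $\tilde g(x,\cdot)$. Under \Cref{ass:kGeneral}, $k$ has finite mode $k(0)<\infty$, so $|\tilde g(x,\bu)|\leq k(0)/h$ and $\tilde g(x,\cdot)$ is square-integrable; Owen's bound therefore applies. Denoting by $\sigma_x^2$ the MC variance of $\tilde g(x,\bU)$, one gets
\[
  \Var[\hat f_n(x)] \leq 2^t \cdot 3^s \cdot \sigma_x^2/n.
\]

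Next I would reuse the MC variance computation already recalled at the beginning of \Cref{sec:rqmc-iv}, which (through the change of variable $w=(x-v)/h$ and a Taylor expansion of $f$ around $x$, using \Cref{ass:f}) gives $\sigma_x^2 = f(x)\mu_0(k^2)/h - f^2(x) + \cO(h)$, with the $\cO(h)$ uniform in $x\in[a,b]$ thanks to the smoothness of $f$ on the compact interval. Integrating the inequality above over $x\in[a,b]$ yields
\[
  \IV \leq \frac{2^t 3^s}{n}\!\int_a^b \sigma_x^2\,\d x = \frac{2^t 3^s}{nh}\,p_0\,\mu_0(k^2) - \frac{2^t 3^s}{n}R(f) + \cO(h/n),
\]
with $p_0 = \int_a^b f(x)\d x \le 1$. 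Dropping the negative $-R(f)/n$ term and bounding $p_0\leq 1$ yields the first claim $\IV \leq 2^t 3^s \mu_0(k^2)/(nh) + \cO(h/n)$.

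For the MISE, the bias of the KDE is unchanged by the randomization rule, so \Cref{ass:kGeneral,ass:f} give $\ISB = \cO(h^4)$ just as under MC. Combining this with the IV bound, $\MISE = \cO(1/(nh)) + \cO(h^4)$; choosing $h = \Theta(n^{-1/5})$ balances the two terms and gives $\MISE = \cO(n^{-4/5})$. The main (minor) subtlety is ensuring the $\cO(h)$ remainder in the pointwise MC variance expansion is uniform in $x\in[a,b]$ so that it survives integration with the same order; this follows directly from \Cref{ass:f} on $[a,b]$. Everything else is a clean application of Owen's NUS variance bound.
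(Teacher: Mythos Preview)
Your approach is essentially the same as the paper's: apply Owen's NUS variance bound pointwise to the integrand $\tilde g(x,\cdot)$, integrate over $x\in[a,b]$, and reuse the MC variance expansion from the beginning of \Cref{sec:rqmc-iv}. The structure and the main inequality are correct.

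There is one step you skipped that the paper treats explicitly. The first displayed inequality $\IV \le 2^t 3^s \mu_0(k^2)/(nh) + \cO(h/n)$ holds for any single $(t,m,s)$-net, but the subsequent claim ``$= \cO(1/(nh))$'' and the MISE rate $\cO(n^{-4/5})$ are asymptotic statements in $n=2^m$, so they require that the factor $2^t$ remain bounded as $m\to\infty$. This is not automatic: a priori $t$ could depend on $m$. The paper closes this gap by invoking known constructions (Sobol', or \cite[Section~4.5]{rNIE92b}) that, for any fixed $s$, provide $(t,m,s)$-nets in base $2$ with a common $t$ valid for all $m$. You should add a sentence to that effect; otherwise the passage from the explicit bound to the $\cO(1/(nh))$ conclusion is not justified.

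A minor remark: the variance inequality $\Var_{\rm NUS} \le 2^t 3^s \Var_{\rm MC}/n$ for $(t,m,s)$-nets in base $2$ is Theorem~1 of \cite{vOWE98b}, not \cite{vOWE97b}; the latter paper handles only the $(0,m,s)$ case.
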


\begin{proof}
Let $\Var_{\rm MC}$ and $\Var_{\rm NUS}$ denote the variance under MC and under NUS, respectively.
Likewise, for any given pair $(n,h)$, let $\IV_{\rm MC}(n,h)$ and $\IV_{\rm NUS}(n,h)$ denote the IV 
under MC and under NUS, respectively, and similarly for the MISE.
Under \cref{ass:kGeneral}, $\tilde g(x,\cdot)$ is square-integrable over $[0,1]^s$ for any $x\in[a,b]$,
so we can apply Theorem 1 of \cite{vOWE98b}, which tells us that 
\hpierre{The constant $b^t$ in front might need additional factors for $b\not=2$...}
\[
  \Var_{\rm NUS}[\tilde g(x,\bU)] ~\le~ 2^t 3^s  \Var_{\rm MC}[\tilde g(x,\bU)].
\]
By integrating, we obtain $\IV_{\rm NUS}(n,h) \le 2^t 3^s  \IV_{\rm MC}(n,h)$.
We saw earlier that 
$\IV_{\rm MC}(n,h) \le \mu_0(k^2)/(nh) - R(f)/n + \cO(h/n)$, 
and this proves the displayed inequality.

For the second part, for any $s\ge 1$, there is a fixed $t \ge 0$ 
for which we know how to construct a $(t,s)$-sequence in base $2$;
see \cite{rSOB67a} and \cite[Section 4.5]{rNIE92b}, for example.
For any integer $m$, the first $2^m$ points of such a sequence form a $(t,m,s)$-net in base 2.
Thus, $t$ can be assumed to be bounded uniformly in $m$, and therefore
\hpierre{Florian: can you check for a specific theorem that we can cite for this, with page number?}%
$\IV_{\rm NUS}(n,h) = \cO(\IV_{\rm MC}(n,h)) = \cO(1/(nh))$.
Since MC and NUS give the same ISB, we also have 
$\MISE_{\rm NUS}(n,h) \le 2^t 3^s  \MISE_{\rm MC}(n,h) = \cO(1/(nh) + h^4) = \cO(h^{-4/5})$ 
if we take $h = \Theta(n^{-1/5})$. 
\end{proof}

\section{Stratified sampling of $[0,1)^s$}
\label{sec:strat}

In this section, we examine how plain stratified sampling of the unit hypercube 
can reduce the IV of the KDE compared with MC.
We consider point sets $P_n$ constructed as in \Cref{ass:strat} below.
This type of stratified sampling can never increase the IV compared with MC.  
We prove this via a standard variance decomposition argument.
Then, under the additional condition that $g(\bu)$ is monotone with respect to each coordinate of $\bu$,
we prove an IV bound that converges at a faster rate than the IV under MC when $s < 5$.
The KH inequality and the variation of $g$ are not involved in the IV bound developed here;
we work directly with the variance.
For this reason, the bound will not contain the annoying factor $h^{-2s}$ as in \cref{prop:vhk}.
On the other hand, the exponent of $n$ will not be as good.
Our main results are \Cref{prop:stratification0,prop:stratification}, and \Cref{cor:miseStrat}.

\begin{assumption}
\label{ass:strat}
The hypercube $[0,1)^s$ is partitioned into $n = q^{s}$ congruent cubic cells 
$S_{\bi} := \prod_{j=1}^{s}\left[{i_j}/{q},{(i_j+1)}/{q} \right)$, 
$\bi \in \bI = \{\bi = (i_1,i_2,\dots,i_s) : 0\leq i_j<q$ for each $j\}$, for some integer $q\geq 2$.
We construct $P_n = \{\bU_1,\dots,\bU_n\}$ by sampling one point uniformly in each subcube $S_{\bi}$,
independently across the subcubes, and put $X_i = g(\bU_i)$ for $i=1,\dots,n$. 
\end{assumption}

\begin{proposition}
\label{prop:stratification0}
Under \Cref{ass:kGeneral,ass:strat},
the IV of a KDE $\hat{f}_n$ with kernel $k$ never exceeds the IV of the same estimator under standard MC,
which satisfies $\IV \le \mu_0(k^2) /(nh) - R(f)/n + \cO(h/n)$.
\end{proposition}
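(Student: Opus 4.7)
The plan is to work pointwise in $x\in[a,b]$, compare $\Var[\hat f_n(x)]$ under stratification against $\Var[\hat f_n(x)]$ under MC using a classical law-of-total-variance decomposition, integrate in $x$, and then invoke the MC computation already recalled in \Cref{sec:rqmc-iv}.

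Fix $x\in[a,b]$ and write $Y(x,\bU) = \tilde g(x,\bU) = h^{-1}k((x-g(\bU))/h)$. Under \Cref{ass:strat}, the samples $\bU_1,\dots,\bU_n$ are \emph{independent}, one per cell $S_{\bi}$, each uniform on its own cell. Hence
\[
  \Var_{\rm strat}[\hat f_n(x)] \;=\; \frac{1}{n^2}\sum_{\bi\in\bI}\Var\!\bigl[Y(x,\bU)\,\big|\,\bU\in S_{\bi}\bigr].
\]
For MC, condition on which cell the single draw $\bU$ falls in; since the cells are equiprobable with probability $1/n$, the total-variance identity gives
\[
  \Var_{\rm MC}[Y(x,\bU)] \;=\; \frac{1}{n}\sum_{\bi\in\bI}\Var\!\bigl[Y(x,\bU)\,\big|\,\bU\in S_{\bi}\bigr] \;+\; \frac{1}{n}\sum_{\bi\in\bI}\bigl(\mu_{\bi}(x)-\mu(x)\bigr)^2,
\]
where $\mu_{\bi}(x)=\EE[Y(x,\bU)\mid\bU\in S_{\bi}]$ and $\mu(x)=\EE[Y(x,\bU)]$. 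Dividing by $n$ and comparing,
\[
  \Var_{\rm strat}[\hat f_n(x)] \;=\; \Var_{\rm MC}[\hat f_n(x)] \;-\; \frac{1}{n^2}\sum_{\bi\in\bI}\bigl(\mu_{\bi}(x)-\mu(x)\bigr)^2 \;\le\; \Var_{\rm MC}[\hat f_n(x)].
\]
\Cref{ass:kGeneral} (bounded mode and finite second moment of $k$) ensures $Y(x,\cdot)$ is square integrable, so all quantities above are finite. Integrating the pointwise inequality over $x\in[a,b]$ yields $\IV_{\rm strat}\le \IV_{\rm MC}$.

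To finish, I invoke the MC computation already carried out in the paragraph preceding \Cref{prop:vhk}: the change of variable $w=(x-v)/h$ gives $\Var_{\rm MC}[Y(x,\bU)] = f(x)\mu_0(k^2)/h - f^2(x) + \cO(h)$, with the $\cO(h)$ term uniform in $x\in[a,b]$ under \Cref{ass:kGeneral,ass:f}. Dividing by $n$ and integrating over $[a,b]$ produces
\[
  \IV_{\rm MC} \;=\; \frac{p_0\,\mu_0(k^2)}{nh}\;-\;\frac{R(f)}{n}\;+\;\cO\!\bigl(h/n\bigr),
\]
with $p_0=\int_a^b f(x)\d x\le 1$, which yields the stated upper bound after chaining with $\IV_{\rm strat}\le \IV_{\rm MC}$.

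There is no real obstacle here: the argument is the textbook ``stratification kills the between-stratum variance'' identity applied at each $x$, combined with the MC variance formula already derived in the paper. The only mild care needed is to ensure the conditional variances and the $\cO(h)$ remainder are integrable and uniform in $x\in[a,b]$, which follows from the regularity built into \Cref{ass:kGeneral,ass:f}; in particular, no assumption on $g$ beyond measurability (implicit in the setup) is required for this proposition, since monotonicity of $g$ is only needed for the sharper rate established in \Cref{prop:stratification}.
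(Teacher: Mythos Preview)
Your proof is correct and follows essentially the same approach as the paper: a pointwise law-of-total-variance decomposition showing that stratification removes the between-stratum term $\frac{1}{n}\sum_{\bi}(\mu_{\bi}(x)-\mu(x))^2$, followed by integration over $[a,b]$ and an appeal to the MC IV expansion already derived in \Cref{sec:rqmc-iv}. The paper's version is slightly terser (it does not spell out the division by $n$ or the integrability check), but the argument is the same.
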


\begin{proof}
We can decompose the variance under MC as
\begin{eqnarray*}
 \Var[\tilde g(x,\bU)] &=& \EE[\Var[\tilde g(x,\bU) \mid \bU\in S_{\bi}] + \Var[\EE[\tilde g(x,\bU) \mid \bU\in S_{\bi}] \\
     &=& \frac1n \sum_{\bi \in \bI} \Var[\tilde g(x,\bU) \mid \bU\in S_{\bi}] + \frac1n \sum_{\bi \in \bI} {(\mu_{\bi}-\mu)^2},
\end{eqnarray*}
where $\mu = \EE[\tilde g(x,\bU)]$ and $\mu_{\bi} = \EE[\tilde g(x,\bU) \mid \bU\in S_{\bi}]$.
By sampling exactly one point in each cell $S_{\bi}$, the stratified sampling removes the second term,
and the first term remains the same.  Therefore, stratification never increases $\Var[\hat{f}_n(x)]$.
The second term also indicates how the amount of variance reduction depends on how the $\mu_{\bi}$ vary between boxes.
\end{proof}

Now we know that stratification can do no harm.  To show that it can also improve the convergence rate of the MISE,
we need additional conditions. 

\begin{assumption}
\label{ass:gmonotone}
For each $j\in\cS$, the function $g : [0,1)^s\to \RR$ is \emph{monotone} in $u_j$ when the other $s-1$ 
coordinates are fixed, and the direction of monotonicity in $u_j$ (nondecreasing or nonincreasing) 
is the same for all values of the other coordinates.  
Without loss of generality, we will assume in the rest of the paper that it is nondecreasing 
in each coordinate.  (If it is nonincreasing in $u_j$, one can simply replace $u_j$ by $1-u_j$
in the definition of $g$ and this does not change the distribution of $X = g(\bU)$.)
\end{assumption}

\begin{proposition}
 \label{prop:stratification}
Let \Cref{ass:kGeneral,ass:strat,ass:gmonotone} hold and let
$\hat{f}_n$ be a KDE with kernel $k$ obtained from $X_1,X_2, \dots,X_{n}$.
Then the IV of $\hat{f}_n$ satisfies
\[
 \IV \leq  {(b -a)}s \cdot k^2(0) \cdot h^{-2}  n^{-(s+1)/s}.
\]
\end{proposition}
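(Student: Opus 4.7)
The plan is to exploit the monotonicity in \Cref{ass:gmonotone} to beat the $\cO(1/(nh))$ rate already guaranteed by \Cref{prop:stratification0}. As in that proof, the independence of the $\bU_i$ across cells gives
\[
  \Var[\hat f_n(x)] = n^{-2}\sum_{\bi\in\bI}\Var[Y_{\bi}(x)], \qquad Y_{\bi}(x) := \tilde g(x,\bU_{\bi}),\ \bU_{\bi}\sim\mathrm{Unif}(S_{\bi}),
\]
and Popoviciu's inequality yields $\Var[Y_{\bi}(x)]\le D_{\bi}(x)^2/4$ for the cell-wise oscillation $D_{\bi}(x) := \sup_{\bu\in S_{\bi}}\tilde g(x,\bu) - \inf_{\bu\in S_{\bi}}\tilde g(x,\bu)$. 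Since $\tilde g(x,\cdot)\le k(0)/h$ uniformly, the trivial bound $D_{\bi}(x)\le k(0)/h$ just reproduces the MC rate, so the task is to extract the additional regularity imposed collectively on the oscillations by the monotonicity of $g$ and the unimodality of $k$.

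The key structural input is that, under \Cref{ass:kGeneral,ass:gmonotone}, for any fixed $\bu_{-j}$ the function $u\mapsto\tilde g(x,(u,\bu_{-j}))$ is the composition of the unimodal $k$ with a nondecreasing map, and is hence unimodal on $[0,1]$ with total variation at most $2k(0)/h$. I would exploit this slice-wise control via the symmetrization identity $\Var[Y_{\bi}] = \tfrac12\EE[(Y_{\bi}-Y'_{\bi})^2]$ with an independent copy $Y'_{\bi}$, interpolating the swap $\bU_{\bi}\mapsto\bU'_{\bi}$ in $s$ single-coordinate steps $\bW^{(0)},\ldots,\bW^{(s)}$ with $\bW^{(0)}=\bU_{\bi}$ and $\bW^{(s)}=\bU'_{\bi}$. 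Cauchy--Schwarz on the resulting telescoping difference gives
\[
  (\tilde g(x,\bU_{\bi})-\tilde g(x,\bU'_{\bi}))^2 \le s\sum_{j=1}^s\bigl(\tilde g(x,\bW^{(j-1)}) - \tilde g(x,\bW^{(j)})\bigr)^2,
\]
and for each $j$ the expectation of the $j$th term, computed by conditioning on the $s-1$ coordinates that $\bW^{(j-1)}$ and $\bW^{(j)}$ share, reduces via a one-dimensional Popoviciu to $\tfrac12(\mathrm{osc}_{I_{i_j}}\tilde g(\cdot,\bu_{-j}))^2$ averaged over those shared coordinates.

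Summing over $\bi$ in the order $\sum_{\bi_{-j}}\sum_{i_j}$ and using, for each fixed $\bu_{-j}$, the unimodality estimate $\sum_{i_j}(\mathrm{osc}_{I_{i_j}}\tilde g(\cdot,\bu_{-j}))^2 \le (\max_{i_j}\mathrm{osc})\cdot(\sum_{i_j}\mathrm{osc}) \le 2k^2(0)/h^2$, and then integrating $\bu_{-j}$ over $[0,1]^{s-1}$, produces $\sum_{\bi}\Var[Y_{\bi}(x)] = \cO(s\,q^{s-1}\,k^2(0)/h^2)$ uniformly in $x\in[a,b]$. Dividing by $n^2 = q^{2s}$ turns $q^{s-1}/n^2$ into $n^{-(s+1)/s}$, and integrating over $[a,b]$ delivers the stated bound. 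The main obstacle is controlling the precise constant $s$ of the proposition: Cauchy--Schwarz and the 1D bound each can cost a factor of $s$, and obtaining the sharp value requires care in the order of summation. As a fallback I would split the cells into the at most $sq^{s-1}$ cells whose range $[g(\bi/q), g((\bi+\bone)/q)]$ straddles $x$---this count following from the standard surface-area-$\le s$ bound for a monotone surface in $[0,1]^s$---on which one applies $D_{\bi}(x)\le k(0)/h$ directly, and the remaining cells on which $\tilde g(x,\cdot)$ is monotone in each coordinate and whose oscillations can be controlled by coordinate-wise telescoping against the 1D total variation $2k(0)/h$.
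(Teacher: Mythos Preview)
Your main line is sound and does deliver the rate $h^{-2}n^{-(s+1)/s}$, but the constant you actually obtain is $s^2/2$ rather than $s$: Cauchy--Schwarz on the $s$-step telescope costs one factor of $s$, and the sum $\sum_{j=1}^s$ that remains after applying the slice-wise bound $\sum_{i_j}(\mathrm{osc}_{I_{i_j}})^2\le 2k^2(0)/h^2$ costs another, so what the argument proves is $\IV\le \tfrac12(b-a)\,s^2 k^2(0)\,h^{-2}n^{-(s+1)/s}$. Your fallback is incomplete as stated: the non-straddling cells still carry oscillation of order $k(0)/h$ near the level set, and ``coordinate-wise telescoping'' on them runs into the same $s^2$ inflation.

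The paper takes a genuinely different route that avoids this loss. Via integration by parts (\cref{lem:stratK}) it bounds $\Var[\hat f_n(x)]$ by a constant times $h^{-2}K_n$, where $K_n=\sup_z\Var[\hat F_n(z)]$ is the uniform variance of the \emph{empirical cdf}. The payoff is that $\hat F_n(z)$ is an average of indicators, so the per-cell variance is at most $1/4$ and \emph{vanishes identically} on every non-straddling cell. The diagonal-string count you allude to (\cref{lem:stratStrings}) then gives $|\cB(z)|\le sq^{s-1}$ straddling cells, whence $K_n\le (s/4)n^{-(s+1)/s}$, and the single factor $s$ drops out cleanly. In short, passing to the cdf is what turns the straddling-cell count into the whole story.

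Your fallback can in fact be repaired to hit (and slightly beat) the stated constant if you telescope along diagonal strings rather than coordinate axes: consecutive cells $S_{\bi'(k)}$, $S_{\bi'(k+1)}$ in a string share a corner, and on each side of the (at most one) straddling cell $\tilde g(x,\cdot)$ is monotone along the diagonal, so $\sum_k D_{\bi'(k)}\le 3k(0)/h$ per string. Combining $\sum_{\bi}D_{\bi}^2\le(\max_{\bi} D_{\bi})\sum_{\bi} D_{\bi}$ with Popoviciu and the bound $sq^{s-1}$ on the number of strings gives $\IV\le\tfrac34(b-a)\,s\,k^2(0)\,h^{-2}n^{-(s+1)/s}$.
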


\begin{corollary}
\label{cor:miseStrat}
 Under \cref{ass:kGeneral,ass:f,ass:strat,ass:gmonotone}, 
 the AMISE bound is minimized by taking $h = \kappa n^{-(s+1)/(6s)}$ with 
 $\kappa^6 = [{(b -a)}s \cdot k^2(0)] / [(\mu_2(k))^2 R(f'')/2]$, and this gives
 $\AMISE = K n^{-\nu}$ with $\nu = (2/3)(s+1)/s$ and 
 $K = {(b -a)}s \cdot k^2(0) \kappa^{-2} + (\mu_2(k))^2 R(f'') \kappa^4 / 4$.
 The exponent of $n$ in this bound beats the MC rate for all $s<5$ and is equal to the MC rate for $s=5$.
\end{corollary}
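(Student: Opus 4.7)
The plan is a direct bias–variance trade-off calculation, since all the ingredients are already available. Because stratification does not affect the expectation of $\hat f_n(x)$, the integrated square bias is identical to the MC case analyzed in \Cref{sec:density-estimators}, namely $\AISB = (\mu_2(k))^2 R(f'')\,h^4/4$ under \Cref{ass:f}. Combining this with the IV bound from \Cref{prop:stratification} gives
\[
  \AMISE \;\le\; (b-a)\,s\,k^2(0)\,h^{-2}\,n^{-(s+1)/s} \;+\; \tfrac{1}{4}(\mu_2(k))^2 R(f'')\,h^4.
\]
So the corollary reduces to minimizing an expression of the form $A h^{-2} + B h^4$ in $h>0$, where $A = (b-a)\,s\,k^2(0)\,n^{-(s+1)/s}$ and $B = (\mu_2(k))^2 R(f'')/4$.

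Next I would set the derivative to zero: $-2A h^{-3} + 4B h^3 = 0$, giving the unique positive solution $h^6 = A/(2B) = 2(b-a)\,s\,k^2(0)\,n^{-(s+1)/s}/[(\mu_2(k))^2 R(f'')]$. This matches $h=\kappa\,n^{-(s+1)/(6s)}$ with $\kappa^6 = [(b-a)\,s\,k^2(0)]/[(\mu_2(k))^2 R(f'')/2]$, as claimed. Substituting $h^{-2}=\kappa^{-2} n^{(s+1)/(3s)}$ and $h^4 = \kappa^4 n^{-2(s+1)/(3s)}$ back into the bound, both the IV and ISB terms carry the same factor $n^{-2(s+1)/(3s)} = n^{-\nu}$, and their coefficients sum to the stated constant $K = (b-a)\,s\,k^2(0)\,\kappa^{-2} + (\mu_2(k))^2 R(f'')\,\kappa^4/4$.

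Finally I would verify the comparison of the exponent $\nu=(2/3)(s+1)/s$ with the MC rate $4/5$ by solving $2(s+1)/(3s) > 4/5 \Longleftrightarrow 10(s+1) > 12s \Longleftrightarrow s<5$, with equality at $s=5$.

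There is no real obstacle here: the only thing to be slightly careful about is that the optimal $h$ from this upper bound need not coincide with the $h$ minimizing the true MISE, but since we only claim an upper bound on the AMISE, any valid $h$ yields a valid bound, and the above choice is the one that makes the two terms balance and produces the advertised rate. The use of $\AISB$ rather than ISB is justified under \Cref{ass:f} just as in \Cref{sec:density-estimators}, and the \cO(h/n) lower-order term from \Cref{prop:stratification0} is absorbed into the dominant terms in the regime $h\to 0$, $nh\to\infty$ determined by the optimal $h$.
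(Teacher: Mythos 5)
Your proposal is correct and is essentially the paper's own argument: the paper likewise notes that the corollary follows by minimizing over $h$ the sum of the AISB from \Cref{sec:density-estimators} and the IV bound of \Cref{prop:stratification}, which is exactly your $A h^{-2}+Bh^4$ calculation. The algebra (optimal $h^6=A/(2B)$, the resulting $\kappa$, $K$, $\nu$, and the comparison $2(s+1)/(3s)\ge 4/5 \iff s\le 5$) all checks out.
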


The corollary is straightforward to prove.  It suffices to minimize with respect to $h$
the sum of the AISB (given in \Cref{sec:density-estimators}) and the IV bound.
\hpierre{In fact, the AMISE bound is minimized by taking $h = \kappa n^{-(s+1)/(6s)}$ with 
 $\kappa^6 = [{(b -a)}s \cdot k^2(0)] / [(\mu_2(k))^2 R(f'')/2]$, and this gives
$\AMISE = K n^{-\nu}$ with $\nu = (2/3)(s+1)/s$ and 
$K = {(b -a)}s \cdot k^2(0) \kappa^{-2} + (\mu_2(k))^2 R(f'') \kappa^4 / 4$.}

Our proof of \cref{prop:stratification} is inspired by~\cite[Proposition 6]{vLEC08a}.
It combines three lemmas that we state and prove before proving the proposition. 
These lemmas could be useful in other contexts as well.
The first lemma bounds the variance of the KDE in terms of a uniform bound $K_{n}$
on the variance of the empirical cdf estimator.   The other lemmas bound $K_n$.

Let $F$ be the cdf of the random variable $X$.
For any $x\in\RR$, the empirical cdf of a sample $X_1,X_2\dots,X_{n}$ evaluated at $x$ is
$\hat{F}_n(x) := n^{-1}\sum_{i=1}^{n} \II[X_i \leq x]$.  
We denote the difference by
$\Delta_n(x) = \hat{F}_n(x) - F(x)$.
Let $K_n := \sup_{x\in\RR} \var[\Delta_n(x)] = \sup_{x\in\RR} \var[\hat{F}_n(x)]$.

\begin{lemma}
 \label{lem:stratK}
Under \cref{ass:kGeneral}, for all $x\in\RR$, we have
 \begin{equation*}
   \var[\hat{f}_n(x)] \leq  2 h^{-2} k(0) K_{n}.
 \end{equation*}
\end{lemma}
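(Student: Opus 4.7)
The plan is to express $\hat f_n(x)-\EE[\hat f_n(x)]$ as a single linear functional of the empirical cdf error process $\Delta_n$, and then estimate its $L^2$ norm via integration by parts followed by Cauchy-Schwarz. The stratification aspect will only be used to verify that $\EE[\hat F_n(v)]=F(v)$; the rest of the argument depends only on \Cref{ass:kGeneral}.

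First, I would write the KDE as a Stieltjes integral: with $\phi(v):=k((x-v)/h)$,
\[
  \hat f_n(x) \;=\; \frac{1}{h}\int_{-\infty}^{\infty} \phi(v)\,d\hat F_n(v),
\]
and use that $\EE[\hat F_n(v)]=F(v)$ (true both under MC and under \Cref{ass:strat}, since one uniform sample per cell $S_\bi$ gives $\EE[\II(g(\bU_i)\le v)]=\Pr[g(\bU)\le v\mid \bU\in S_\bi]$ whose average over cells is $F(v)$) to get
\[
  \hat f_n(x)-\EE[\hat f_n(x)] \;=\; \frac{1}{h}\int_{-\infty}^{\infty} \phi(v)\,d\Delta_n(v).
\]

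Second, I would integrate by parts. Under \Cref{ass:kGeneral}, $\phi$ is unimodal, vanishes at $\pm\infty$, and has total variation equal to $2k(0)$ (it rises monotonically from $0$ to $k(0)$ on $(-\infty,x]$ and decreases monotonically back to $0$ on $[x,\infty)$). Since $\hat F_n$ and $F$ both tend to $0$ at $-\infty$ and to $1$ at $+\infty$, $\Delta_n$ vanishes at $\pm\infty$, so the boundary terms cancel and
\[
  \hat f_n(x)-\EE[\hat f_n(x)] \;=\; -\,\frac{1}{h}\int_{-\infty}^{\infty} \Delta_n(v)\,d\phi(v).
\]
Applying Cauchy-Schwarz to the signed measure $d\phi$ with respect to its total-variation measure $|d\phi|$, which has total mass $2k(0)$, gives
\[
  \Bigl(\!\int \Delta_n(v)\,d\phi(v)\!\Bigr)^{\!2} \;\le\; |d\phi|(\RR)\cdot\!\int \Delta_n(v)^2\,|d\phi|(v) \;=\; 2k(0)\!\int \Delta_n(v)^2\,|d\phi|(v).
\]
Taking expectations, using Fubini (justified because $\EE[\Delta_n(v)^2]\le K_n$ uniformly and $|d\phi|$ is finite), and invoking the definition of $K_n$,
\[
  \var[\hat f_n(x)] \;\le\; \frac{2k(0)}{h^2}\int \EE[\Delta_n(v)^2]\,|d\phi|(v) \;\le\; \frac{2k(0)\cdot K_n\cdot 2k(0)}{h^2} \;=\; \frac{4\,k(0)^2\,K_n}{h^2},
\]
which is the claimed bound up to the constant and an apparent missing power on $k(0)$ in the lemma's display.

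The main technical obstacle is the integration-by-parts step: since $\hat F_n$ is a step function, one must interpret $\int \phi\,d\hat F_n$ as a Lebesgue-Stieltjes integral and verify that the boundary terms at $\pm\infty$ genuinely vanish. This is handled by noting that $\Delta_n$ has limit $0$ on both ends (its one-sided limits exist because $\phi$ and the two cdfs do) and $\phi$ has compact effective support in the sense that $\phi(\pm\infty)=0$, so $[\phi\Delta_n]_{-\infty}^{+\infty}=0$. Everything else is routine: symmetry and unimodality of $k$ give the total-variation evaluation, and Fubini/Cauchy-Schwarz do the heavy lifting.
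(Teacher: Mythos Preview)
Your proof is correct and follows essentially the same route as the paper: write $\hat f_n(x)-\EE[\hat f_n(x)]$ as a Stieltjes integral of $k((x-\cdot)/h)$ against $d\Delta_n$, integrate by parts, and then bound using $\sup_v \EE[\Delta_n(v)^2]=K_n$ together with the total variation $2k(0)$ of the kernel. The paper expands the square as a double integral and bounds $\EE[\Delta_n(y)\Delta_n(z)]\le K_n$, whereas you apply Cauchy--Schwarz against the signed measure $|d\phi|$; these are equivalent, and your version is arguably cleaner since it makes the needed absolute value on $k'$ explicit (the paper's displayed inequality \cref{eqn:varIneq} omits it).

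You are also right about the constant: both arguments actually yield $4k(0)^2 h^{-2}K_n$, not $2k(0)h^{-2}K_n$. This is consistent with \cref{prop:stratification}, whose bound has $k^2(0)$ and follows from $4k(0)^2 h^{-2}K_n$ combined with $K_n\le (s/4)n^{-(s+1)/s}$; the stated constant in the lemma appears to be a typo.
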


\begin{proof}
We will prove the inequality
\begin{equation}
\label{eqn:varIneq}
 \var[\hat{f}_n(x)] \leq K_{n} {h^{-4}} \left( \int_{\RR} k'\left( \frac{x-z}{h} \right) \d z  \right)^{2}.
\end{equation}
The result follows from this inequality by making the change of variable $w = (x-z)/h$ in the integral.
To prove \cref{eqn:varIneq}, we rewrite the variance of $\hat{f}_n(x)$ using integration by parts as follows: 
\begin{align}
  \EE\left[ \left| \hat{f}_n(x) - \EE [\hat{f}_n(x)]\right|^{2}\right] 
  =&
  {h^{-2}}\, \EE\left[ \left|\int_{\RR}k\left( \frac{x-z}{h} \right)\d \hat{F}_n(z) - \int_{\RR}k\left( \frac{x-z}{h} \right)\d{F}(z) \right|^{2}\right] \nonumber\\
  =&
  h^{-4}\,\EE \left[ \left|\int_{\RR}\left( \hat{F}_n(z) - F(z) \right) k'\left( \frac{x-z}{h} \right)\d z \right|^{2}\right] \nonumber\\
  =&
 {h^{-4}} \,\EE \left[\int_{\RR}\int_{\RR} \Delta_n(y) \Delta_n(z) k'\left( \frac{x-y}{h} \right) k'\left( \frac{x-z}{h} \right)  \d y\d z\right].
 \label{eqn:varIneqCont}
\end{align}
Observe that $\EE[\Delta_n(z)] = 0$ since $\EE[\hat{F}_n(z)] = F(z)$.  
Consequently,
$$ \EE[\Delta_n(y)\Delta_n(z)] = \Cov[\Delta_n(y),\, \Delta_n(z)] \leq K_{n}. $$
Finally, \cref{eqn:varIneq} follows by interchanging the expectation and the integrals in 
\cref{eqn:varIneqCont}.
\end{proof}

For all $x\in \RR$, define $\bH(x) = \{\bu \in [0,1)^s : g(\bu) \le x\}$ and 
its complement $\overline{\bH}(x) = [0,1)^s \setminus \bH(x)$.
Under \cref{ass:strat}, let $\cB(x)$ be the set of subcubes $S_{\bi}$ that have a nonempty intersection
with both $\bH(x)$ and $\overline{\bH}(x)$.
The next lemma bounds the cardinality of $\cB(x)$ when $g$ is nondecreasing.

\hpierre{For any point set $P_n$ and $x\in\RR$, we have $F(x) = \PP(g(\bu) \le x) = \vol(\bH(x))$ and 
$\hat F_n(x) = |P_n\cap \bH(x)|/n$, so $\Delta(x) = |P_n\cap \bH(x)|/n - \vol(\bH(x))$.}


\begin{lemma}
\label{lem:stratStrings}
Under \cref{ass:strat,ass:gmonotone}, for all $x\in\RR$, $|\cB(x)| \le s n^{(s-1)/s}$.
\end{lemma}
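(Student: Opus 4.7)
The plan is to decompose $\cB(x)$ into $s$ disjoint pieces, one per coordinate direction, and bound each piece by $q^{s-1}=n^{(s-1)/s}$ via an injectivity argument on projections. Since $g$ is continuous and, by \Cref{ass:gmonotone}, nondecreasing in each coordinate, the infimum and supremum of $g$ on $\overline{S_{\bi}}$ are attained at the corners $\bi/q$ and $(\bi+\bone)/q$, respectively. Writing $L(\bi)=g(\bi/q)$ and $U(\bi)=g((\bi+\bone)/q)$, the definition of $\cB(x)$ simplifies to $\cB(x)=\{\bi:L(\bi)\le x<U(\bi)\}$.

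For each $\bi\in\cB(x)$ I would follow the axis-aligned staircase path from $\bi/q$ to $(\bi+\bone)/q$ that increments coordinates in the order $1,2,\dots,s$: define $\bp^{(0)}=\bi/q$ and $\bp^{(j)}=\bp^{(j-1)}+q^{-1}\be_j$, where $\be_j$ is the $j$-th standard basis vector, so that $\bp^{(s)}=(\bi+\bone)/q$. Monotonicity makes $(g(\bp^{(j)}))_{j=0}^{s}$ nondecreasing, and the sequence starts at $\le x$ and ends at $>x$. Hence there is a unique smallest index $j(\bi)\in\cS$ for which $g(\bp^{(j(\bi))})>x$ while $g(\bp^{(j(\bi))-1})\le x$. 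This yields the disjoint partition $\cB(x)=\bigsqcup_{j=1}^{s}\cB^{(j)}(x)$ with $\cB^{(j)}(x)=\{\bi\in\cB(x):j(\bi)=j\}$, reducing the task to showing $|\cB^{(j)}(x)|\le q^{s-1}$ for every $j$.

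The key step is to prove that the projection $\pi_{j}$ that discards the $j$-th coordinate is injective on $\cB^{(j)}(x)$. Arguing by contradiction, suppose $\bi,\bi'\in\cB^{(j)}(x)$ agree in every coordinate except $j$ with $i_j'>i_j$. Then $\bp^{(j-1)}_{\bi'}$ and $\bp^{(j)}_{\bi}$ coincide in every coordinate but the $j$-th, where $\bp^{(j-1)}_{\bi'}$ has the value $i_j'/q\ge(i_j+1)/q$; monotonicity of $g$ in coordinate $j$ then gives $g(\bp^{(j-1)}_{\bi'})\ge g(\bp^{(j)}_{\bi})>x$, contradicting $g(\bp^{(j-1)}_{\bi'})\le x$ coming from $\bi'\in\cB^{(j)}(x)$. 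Summing $|\cB^{(j)}(x)|\le q^{s-1}$ over $j$ yields $|\cB(x)|\le sq^{s-1}=sn^{(s-1)/s}$.

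The main obstacle is recognising that an ordinary single-direction projection is insufficient, because a column can contain many cells of $\cB(x)$ (as already happens for $g(\bu)=u_1+\cdots+u_s$); the first-crossing labeling $j(\bi)$ is precisely what redistributes the cells across directions so that exactly one cell per $(s-1)$-column survives in each $\cB^{(j)}(x)$. Minor edge effects stemming from the half-open nature of $S_{\bi}$ and non-attainment of the supremum are absorbed by the continuity of $g$ that follows from \Cref{ass:g}.
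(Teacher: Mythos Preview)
Your argument is correct and yields the same bound, but it follows a genuinely different route from the paper's. The paper partitions $\bI$ into \emph{diagonal strings} $\{\bi+k\bone : 0\le k < q-\max_j i_j\}$ started from indices $\bi$ having at least one zero coordinate, notes that there are at most $sq^{s-1}$ such strings, and shows (by comparing two corners) that no string can meet $\cB(x)$ in more than one cell. You instead assign to each boundary cell a first-crossing direction $j(\bi)$ along a fixed staircase path and show that cells sharing a label project injectively to $\{0,\dots,q-1\}^{s-1}$. Both arguments exploit coordinatewise monotonicity in the same way; yours trades the geometric picture of diagonal strings for a first-crossing/pigeonhole argument, which is a bit more combinatorial but equally clean and arguably more explicit about why the factor $s$ appears.

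One small point: you appeal to continuity of $g$ (and to \Cref{ass:g}) to get the equality $\cB(x)=\{\bi:L(\bi)\le x<U(\bi)\}$, but \Cref{ass:g} is not among the hypotheses of this lemma. Fortunately continuity is unnecessary here: monotonicity alone gives $g(\bi/q)\le g(\bu)\le g((\bi+\bone)/q)$ for every $\bu\in S_{\bi}$, hence the inclusion $\cB(x)\subseteq\{\bi:L(\bi)\le x<U(\bi)\}$, and that inclusion is all your counting argument actually uses.
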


\begin{proof}
 Let $\bI_0 = \{\bi = (i_1,i_2,\dots,i_s) \in\bI$ such that $\min_{j} i_j = 0\}$, which is
 the set of indices $\bi\in\bI$ for which at least one face of $S_{\bi}$ lies on a face of 
 $[0,1)^{s}$ that contains the origin. 
 Each of the $s$ faces of $[0,1)^{s}$ touches at most $b^{s-1}$ elements of $\bI_0$
 and therefore $|\bI_0| \le s q^{s-1} = s n^{(s-1)/s}$.
 Now, for any $\bi = (i_1,i_2,\dots,i_s) \in\bI_0$, consider the \emph{diagonal string} of subcubes 
 $S_{\bi'(k)}$ with $\bi'(k) = (i_1 +k,i_2+k,\dots,i_s+k)$ for $0\le k < q-\max_j i_j$.
 We argue that for any $x\in\RR$, at most one subcube in this diagonal string can belong to $\cB(x)$.
 Indeed, suppose that two distinct subcubes in the string belong to $\cB(x)$, say 
 $S_{\bi'(k_1)}$ and $S_{\bi'(k_2)}$ for $k_1 < k_2$.  
 Since both subcubes contain points from $\bH(x)$ and $\overline{\bH}(x)$, there must be two points
 $\bu_1 \in S_{\bi'(k_1)} \cap \overline{\bH}(x)$ and $\bu_2\in S_{\bi'(k_2)}\cap\bH(x)$.
 This implies that $g(\bu_2) \le x < g(\bu_1)$ while $\bu_1 < \bu_2$ coordinatewise,
 which contradicts the assumption that $g$ is nondecreasing.
 Since there are no more than $s n^{(s-1)/s}$ diagonal strings and each contains at most one element of $\cB(x)$,
 the result follows.
\end{proof}

\begin{lemma}
\label{lem:stratKn}
Under \cref{ass:strat,ass:gmonotone}, $K_n \le ({s}/{4}) h^{-2} n^{-(s+1)/s}$.
\end{lemma}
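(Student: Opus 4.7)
The plan is to exploit the independence of the stratified sample across cells together with the fact that, at any fixed $x$, only cells that straddle the level set $\{g = x\}$ contribute any variance to $\hat{F}_n(x)$. Concretely, I would first write
\[
  \hat{F}_n(x) = \frac{1}{n}\sum_{\bi\in\bI} \II[\bU_{\bi}\in \bH(x)],
\]
where $\bU_{\bi}$ is the single stratified sample in $S_{\bi}$. Since the points $\bU_{\bi}$ are mutually independent across $\bi$ under \cref{ass:strat}, this immediately gives the variance decomposition
\[
  \Var[\hat{F}_n(x)] = \frac{1}{n^{2}} \sum_{\bi\in\bI} \Var\!\left[\II[\bU_{\bi}\in \bH(x)]\right].
\]

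Next, I would observe that if $S_{\bi}\subseteq \bH(x)$ or $S_{\bi}\subseteq \overline{\bH}(x)$, then the indicator $\II[\bU_{\bi}\in \bH(x)]$ is a.s.\ constant and its variance is $0$. Thus only the cells in $\cB(x)$ contribute to the sum, and for each such cell the indicator is a Bernoulli random variable whose variance is bounded above by $1/4$. This yields
\[
  \Var[\hat{F}_n(x)] \le \frac{|\cB(x)|}{4n^{2}}.
\]

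To finish, I would invoke \cref{lem:stratStrings}, which (crucially using the coordinatewise monotonicity of $g$ via the diagonal-string argument) gives $|\cB(x)|\le sn^{(s-1)/s}$ uniformly in $x$. Substituting and taking the supremum over $x\in\RR$ produces
\[
  K_n \le \frac{s}{4}\,n^{-2+(s-1)/s} = \frac{s}{4}\, n^{-(s+1)/s},
\]
which is the claim (the $h^{-2}$ factor appearing in the stated bound plays no role in the empirical-cdf-level estimate and is absorbed into the downstream combination with \cref{lem:stratK}).

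The main obstacle was actually already handled by \cref{lem:stratStrings}; once $|\cB(x)|$ is controlled, the remainder of the argument is a routine combination of the stratified-sampling variance decomposition with the trivial Bernoulli variance bound $\tfrac14$. The only subtlety worth emphasizing is why independence across cells implies that cells fully contained in $\bH(x)$ or $\overline{\bH}(x)$ can be ignored, which makes the counting in \cref{lem:stratStrings} the pivotal quantity.
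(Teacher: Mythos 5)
Your proof is correct and follows essentially the same route as the paper's: a variance decomposition over the independent strata, the Bernoulli variance bound $p(1-p)\le 1/4$ for the cells in $\cB(x)$ (the others contributing zero), and \cref{lem:stratStrings} to bound $|\cB(x)|$, yielding $K_n\le (s/4)n^{-(s+1)/s}$. You are also right that the $h^{-2}$ in the stated bound is extraneous to this estimate (it only enters later via \cref{lem:stratK}), so the claimed inequality holds a fortiori for $h\le 1$.
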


\begin{proof}
For each $\bi\in\bI$, consider the random variables
 \begin{equation*}
 \delta_{\bi}(x) = \left| P_n \cap \bH(x) \cap S_{\bi} \right| - n\,\vol(\bH(x) \cap S_{\bi}).
\end{equation*}
We make three observations. 
Firstly, $S_{\bi}$ contains exactly
one point of $P_n$ by \cref{ass:strat}. Consequently, each $\delta_{\bi}(x)$ is a Bernoulli 
random variable (with parameter $p=n\,\vol(\bH(x)\cap S_{\bi})$) minus its mean $p$ and, therefore, 
$\var[\delta_{\bi}(x)] = p(1-p) \leq 1/4$.  
Secondly, for each $\bi$  for which $S_{\bi} \not\in \cB(x)$, 
$\delta_{\bi}(x) = 0$, so $\var[\delta_{\bi}(x)] = 0$. 
Thirdly, for any two distinct subcubes, the positions of the points of $P_n$ in these subcubes are independent.
As a consequence of these three observations we see that
\begin{align*}
 \var[\Delta_n(x) ]
 =& \var\left[ \frac{1}{n} \sum_{\bi \in\bI} {\delta_{\bi}(x)} \right] 
 = \frac{1}{n^2} \sum_{\bi: S_{\bi} \in\cB(x)} \var[\delta_{\bi}(x)]
 \leq
 \frac{1}{4n^{2}} s n^{-(s+1)/s}.
\end{align*}
By applying \Cref{lem:stratK,lem:stratStrings} we then obtain
\[ 
 \var[\hat{f}_n(x)] ~\leq~ 
   2 h^{-2} k(0) K_n  ~\leq~ \frac{k(0)}{2 (hn)^{2}} |\cB(x)| 
   ~\leq~  \frac{s k(0)}{2}  h^{-2}  n^{-(s+1)/s}.
\]
\end{proof}

\begin{proof}[Proof of \cref{prop:stratification}]
Combining \cref{lem:stratK,lem:stratKn} and integrating the variance bound with respect to $x$ 
over $[a,b]$ yields the result.
\end{proof}

In the above arguments, we assumed that the strata were cubic, but this is not necessary.
We could instead partition $[0,1)^s$ into $n=\prod_{j=1}^s q_j$
cells congruent to $\prod_{j=1}^d[0,1/q_j)$, 
subject to the condition $\max_j q_j \le \lambda\min_j q_j$ for some $\lambda<\infty$. 
Then one can bound the cardinality of $\cB(x)$ and $\var[\hat F_n(x)]$ in a similar way.
Non-cubic strata make sense if $g$ varies more in some directions than in others.
Finally, our bounds are proved under the assumption that $g$ is monotone, 
but this assumption is \emph{not necessary} for stratification to improve the MISE 
and/or its convergence rate.
\section{Empirical Study}
\label{sec:empirical}

Our analysis in the previous sections was in terms of (asymptotic) bounds.
Here, we study the IV and MISE behavior from a different viewpoint:
our goal is to estimate empirically how they really behave in a range of values 
of $n$ and $h$ that one is likely to use. 
For this, we use a simple regression model to approximate the true IV and MISE
in the region of interest. 
For some examples, we estimate the model parameters from simulated data,
test the goodness of fit of the regression models in-sample and out-of-sample,   
and show how the model permits one to estimate the optimal $h$ as a function of $n$,
as well as the resulting MISE and its convergence rate, under RQMC.
\hpierre{Without this type of modeling, we would not know how to choose $h$ and it would be hard
to estimate the optimal MISE and its convergence.}


\subsection{Experimental setting and regression models for the local behavior of the IV, ISB, and MISE}
\label{sec:experiment}
\label{sec:numerical}

We will use the following models to approximate 
the true IV and ISB in a limited range of values of $n$ and $h$ of interest:
\begin{equation}                                \label{eq:model-iv-isb}
  \IV \approx C n^{-\beta} h^{-\delta} 
     \quad\mbox{ and }\quad
  \ISB \approx B h^\alpha, 
\end{equation}
for positive constants $C$, $\beta$, $\delta$, and $B$, 
that can be estimated as explained below, and with $\alpha=4$.
\hflorian{Since we removed the histogram estimator, do we keep it like that or insert $\alpha=4$ right away. It might make things more difficult for the online appendix, though.}
%
This gives  
$ \MISE \approx C n^{-\beta} h^{-\delta} + B h^\alpha$.
The bounds derived in the previous sections have this form, and this 
motivates our model, but here we want to estimate the true values, 
which generally differ from the bounds.
Once the parameters are estimated, we can estimate the optimal $h$ by 
minimizing the MISE estimate for any given $n$ in the selected range.
In our setting, this estimated MISE is a convex function of $h$.
Taking the derivative with respect to $h$ and setting it to zero yields
  $h^{\alpha+\delta} = [{C\delta}/{(B\alpha)}] n^{-\beta}$.
Thus, if we take $h = \kappa n^{-\gamma}$, the constants $\kappa$ and $\gamma$
that minimize the MISE (based on our model) 
are $\kappa = \kappa_* := (C\delta/B\alpha)^{1/(\alpha+\delta)}$ 
and $\gamma = \gamma_* := \beta/(\alpha + \delta)$.
Plugging them into the MISE expression gives
\begin{equation}                                \label{eq:mise-approx-n}
 \MISE \approx K n^{-\nu}
\end{equation}
with $K = K_* := C\kappa_*^{-\delta} + B\kappa_{*}^\alpha$ 
and $\nu = \nu_* := \alpha\beta/(\alpha+\delta)$.
If $h$ is taken too small (e.g., by taking $\kappa < \kappa_*$  or $\gamma > \gamma_*$
in the formula for $h$), 
the IV will be too large and will dominate the MISE, 
so we will observe a MISE that decreases just like the IV.
The opposite happens if $h$ is too large: the ISB dominates the MISE.

To estimate the model parameters for IV, we take the log 
to obtain the linear model
\begin{equation}                                \label{eq:log-linear}
  \log(\IV) \approx \log C - \beta \log n - \delta \log h,
\end{equation}
and we estimate the parameters $C$, $\beta$, and $\delta$ by linear regression.
Since $n$ is always a power of 2 for our RQMC points, we take all the logarithms in base 2.
In our experiments, we selected a set of 36 pairs $(n,h)$ 
with $n = 2^{14}, \dots, 2^{19}$ and $h = h_0,\dots,h_5$
where $h_j = h_0 2^{j/2} = 2^{-\ell_0 + j/2}$ and $2\ell_0$ is an integer selected from pilot runs.
This selection of $\ell_0$ is the only step that requires human intervention.

For each $n$ and each point set (MC, Stratification or RQMC), we generate a sample of size $n$, 
sort the sample, and then compute the density estimator for each $h$, 
for this sample. 
\hflorian{Should we include stratification in ``(MC or RQMC)''. 
Because until now we have always pretty much separated stratification from RQMC.}
\hflorian{Actually, there is only one density estimator left\ldots}
That is, we use the same sample for all estimation methods and all $h$.
We make $n_r=100$ independent replications of this procedure, which gives us 
independent replicates of the density estimator for the selected pairs $(n,h)$.
To obtain an unbiased estimator of the integral that defines the IV,
we take a stratified sample of $n_e = 1024$ evaluation points over the interval $[a,b]$,
compute the empirical variance of the KDE at each point, 
based on the $n_r$ replications, and take the average multiplied by $(b-a)$.
Larger values of $n_e$ gave about the same estimates.

We approximate the ISB in \cref{eq:model-iv-isb} by the AISB, for which $\alpha=4$
and $B = (\mu_2(k))^2 R(f'')/4$.   
For the Gaussian kernel, used in all our experiments, 
\Cref{ass:kGeneral,ass:kernel} are satisfied, and $\mu_2(k) = 1$.
We estimate the integral $R(f'')$ as explained in \Cref{sec:density-estimators},
using RQMC instead of MC to improve the accuracy.

Once we have the estimates $\hat{\kappa}_*$ and $\hat{\gamma}_*$ of 
$\kappa_*$ and $\gamma_*$, we test the models out-of-sample by making an
independent set of simulation experiments with pairs $(n,h)$ that satisfy 
$h = \hat{h}_*(n) := \hat{\kappa}_* n^{-\hat{\gamma}_*}$ (the estimated optimal $h$)
for a series of values of $n$.
\hflorian{As a matter of fact, we have never introduced the optimal $h$ as $h_*$!}
At each of these pairs $(n,h)$, we sample $n_r$ fresh independent replicates of the RQMC 
density estimator and compute the IV estimate, as well as the MISE estimate in the 
simple examples where the density is known.  
In the latter case, we fit again the linear regression model for $\log(\MISE)$
vs $\log n$ to re-estimate the parameters $K$ and $\nu$ 
in \cref{eq:mise-approx-n} and assess the goodness-of-fit.
In our results, we denote these new estimates by $\tilde K$ and $\tilde \nu$.
Of course, these model testing steps are not needed if one wishes to only estimate 
the density $f$ and not to study the convergence properties.

In the end, we also compare the efficiencies of different methods for the same example
by comparing their estimated MISE for $n = 2^{19}$ with the $h$ recommended by the model.
We denote by e19 the value of $-\log_2(\MISE)$ for $n = 2^{19}$; 
that is, we have $\MISE = 2^{-e19}$.
The efficiency gain of RQMC vs MC can be assessed by comparing their e19 values.

The point sets considered in our experiments were:
(1) independent points (MC);
(2) stratification of the unit cube (Stratif);
(3) a Sobol' point set with a left random matrix scrambling and random digital shift (Sobol'+LMS); and
(4) a Sobol' point set with nested uniform scrambling (Sobol'+NUS).
\hflorian{Can we assume that it is known what ``NUS'' stands for or should we write it in full at least once?}
%
The last two are well-known RQMC point sets  \cite{vLEC18a,vOWE97b,vOWE03a}
and we view stratification as a weak form of RQMC.
The short names in parentheses are used in the plots and tables.
These point sets and randomizations are implemented in SSJ \cite{iLEC16j},
which we used for our experiments.
\hpierre{More details and results can be found in the online supplement.}


\subsection{A normalized sum of standard normals}
\label{sec:example-N01}

\hpierre{-- One can find $R(f'')$ when $f$ is the standard normal density apparently in \cite{tWAN95a}.
   We also know that $\mu_0(k) = 1/2\sqrt{\pi} \approx 0.2821$ for the Gausian kernel; see \cite{tSCO15a}, page 152.
	 Check also \cite{tMAR92a}.}

As in \cite{tOWE17a}, we construct a set of test functions with arbitrary dimension $s$ 
and for which the density $f$ of $X$ is always the standard normal,
$f(x) = \exp(-x^2/2)/\sqrt{2\pi}$ for any $s$.
For this, let $Z_1,\dots,Z_s$ be $s$ independent standard normal random variables generated by inversion 
and put $X = (a_1 Z_1 + \cdots + a_s Z_s)/\sigma$, where $\sigma^2 = a_1^2 + \cdots + a_s^2$.
For this simple example, the density is already known, so there is no need to estimate it,
but this is convenient for testing the methodology, since it permits us to compute and compare unbiased 
estimators of the IV, ISB, and MISE for both MC and RQMC.
For MC, these quantities do not depend on $s$, but for RQMC, the IV and MISE do depend on $s$,
and we want to see in what way.

We can also compute $R(f'')$   
exactly in this example, 
which means we can compute $B$ for the AISB and the asymptotically optimal $h$ for the AMISE.
However, we will first make experiments as if we did not know this $B$ and have to estimate it,
and then compare our estimates with the exact $B$.
Here, $g$ is a monotone increasing function, so \Cref{cor:miseStrat} applies when we use stratification.
\Cref{ass:g} holds only if we truncate the normal distributions of the $Z_j$, 
but it makes no significant difference on our empirical results if the truncated range contains 
the interval $[-8,8]$, for example, so from the practical viewpoint, we can ignore it.
\hflorian{The last sentence seems a little misplaced. Maybe we should put it towards the end of the experiment, when we actually compare the theoretical and empirical results.}

\begin{table}[!htbp] 
 \centering
\caption{Parameter estimates for the KDE, for a sum of normals, over $[-2,2]$.}
\label{tab:sumnormalnormalized-parameters-kde}
\small
\begin{tabular}{l | l l l l l l l l l l l }
			& MC		& NUS		& LMS		& NUS		& LMS		& NUS		& LMS		& NUS		& NUS				& NUS	 & NUS \\
\hline
  $s$       &         &  1      &  2      &  2      &  3      &  3    &  5    &  5      &  10   &  20  & 100 \\
 $\ell_0$		& 4.5		  & 8.5		  & 6.0		  & 6.0		  & 5.0		  & 5.0		& 4.5		& 4.5		  & 4.0			& 4.0 & 4.0 \\
 $C$		  	& 0.265		& 0.032		& 0.243		& 0.212		& 0.144		& 0.180		& 0.140		& 0.096		& 0.029	& 0.078 & 0.079\\
 $\beta$		& 1.038		& 2.791		& 2.112		& 2.101		& 1.786		& 1.798		& 1.301		& 1.270		& 1.011	& 0.996 & 1.010\\
 $\delta$		& 1.134		& 3.004		& 3.196		& 3.196		& 3.383		& 3.357		& 2.295		& 2.303		& 1.811	& 1.421 & 1.463\\
 $R^2$			& 0.999		& 0.999		& 1.000		& 1.000		& 0.995		& 0.995		& 0.979		& 0.978		& 0.990	& 0.991 & 0.996\\
 \hline
 $\hat{\kappa}_*$	& 1.121		& 0.925		& 1.238		& 1.215		& 1.156		& 1.191		& 1.109		& 1.045		& 0.820	& 0.925 & 0.934\\
 $\hat{\gamma}_*$	& 0.202		& 0.398		& 0.293		& 0.292		& 0.242		& 0.244		& 0.207		& 0.201		& 0.174	& 0.184 & 0.185\\
 $\ell_*$		      & 3.675		& 7.682		& 5.268		& 5.266		& 4.386		& 4.391		& 3.776		& 3.765		& 3.590	& 3.604 & 3.612\\
 $\hat{K}_*$		& 0.299		& 0.071		& 0.221		& 0.205		& 0.163		& 0.184		& 0.173		& 0.137		& 0.061		& 0.117 & 0.119\\
 $\hat{\nu}_*$		& 0.808		& 1.594		& 1.174		& 1.168		& 0.967		& 0.978		& 0.826		& 0.806		& 0.696	& 0.735 & 0.740\\
 $\tilde{\nu}$		& 0.781		& 1.595		& 1.176		& 1.169		& 0.976		& 0.975		& 0.832		& 0.806		& 0.744	& 0.764 & 0.774\\
 \hline
 e19			       & 17.01		& 34.06		& 24.39		& 24.38		& 20.79		& 20.80		& 17.88		& 17.79		& 17.28	& 17.07 & 17.05
\end{tabular}
\end{table}

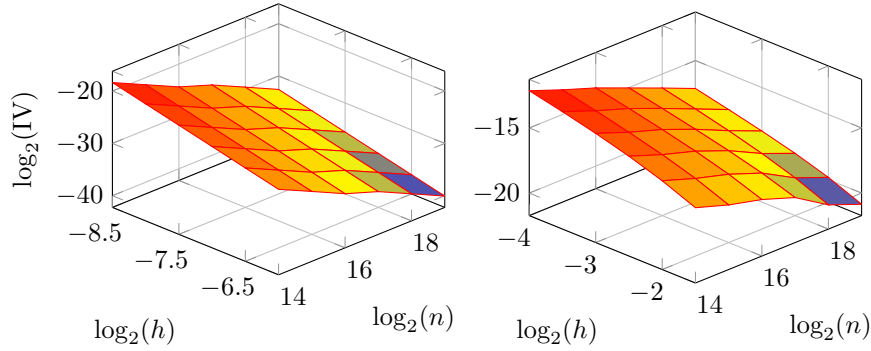
\begin{figure}[!htbp] 
\small
\centering
  \begin{tikzpicture}
   \begin{axis}[ 
      xlabel=$\log_2(h)$,
      ylabel=$\log_2(n)$,
      zlabel=$\log_2(\IV)$,
			xtick={-8.5, -7.5, -6.5},
      grid,
      width = 6cm,
      view={45}{35}
      ] 
          \addplot3 [ surf, mesh/rows=6, faceted color=red]coordinates { 
  (     -8.5 , 14.0  ,-18.37537827520431) 
  (     -8.0 , 14.0  ,-19.876120327280525) 
  (     -7.5 , 14.0  ,-21.37205632611671) 
  (     -7.0 , 14.0  ,-22.876421481330194) 
  (     -6.5 , 14.0  ,-24.391678301312837) 
  (     -6.0 , 14.0  ,-25.89607923261192) 
  (     -8.5 , 15.0  ,-21.366222596893614) 
  (     -8.0 , 15.0  ,-22.874401137177774) 
  (     -7.5 , 15.0  ,-24.381707801126375) 
  (     -7.0 , 15.0  ,-25.87514442697066) 
  (     -6.5 , 15.0  ,-27.37999397563076) 
  (     -6.0 , 15.0  ,-28.878081024759407) 
  (     -8.5 , 16.0  ,-24.355149311211374) 
  (     -8.0 , 16.0  ,-25.857622330724677) 
  (     -7.5 , 16.0  ,-27.35551984803196) 
  (     -7.0 , 16.0  ,-28.864551729561622) 
  (     -6.5 , 16.0  ,-30.37197715253164) 
  (     -6.0 , 16.0  ,-31.888358220914046) 
  (     -8.5 , 17.0  ,-26.564234469135272) 
  (     -8.0 , 17.0  ,-28.07652871814744) 
  (     -7.5 , 17.0  ,-29.579465728485545) 
  (     -7.0 , 17.0  ,-31.0822369574906) 
  (     -6.5 , 17.0  ,-32.57536432838298) 
  (     -6.0 , 17.0  ,-34.064156805163115) 
  (     -8.5 , 18.0  ,-29.55820195353696) 
  (     -8.0 , 18.0  ,-31.061441025331245) 
  (     -7.5 , 18.0  ,-32.56830458841306) 
  (     -7.0 , 18.0  ,-34.06500361469599) 
  (     -6.5 , 18.0  ,-35.558613272124674) 
  (     -6.0 , 18.0  ,-37.05769086836235) 
  (     -8.5 , 19.0  ,-32.57008558530116) 
  (     -8.0 , 19.0  ,-34.05921483146138) 
  (     -7.5 , 19.0  ,-35.563896076087204) 
  (     -7.0 , 19.0  ,-37.060833294157895) 
  (     -6.5 , 19.0  ,-38.56228016923607) 
  (     -6.0 , 19.0  ,-40.06412978606427) 
      }; 
   \end{axis}
  \end{tikzpicture}
\begin{tikzpicture}
    \begin{axis}[ 
      xlabel=$\log_2(h)$,
      ylabel=$\log_2(n)$,
      grid,
      width = 6cm,
      view={45}{35}
      ] 
  \addplot3 [ surf, mesh/rows=6, faceted color=red]coordinates { 
  (     -4.0 , 14.0  ,-12.141834632610525) 
  (     -3.5 , 14.0  ,-12.714513626079922) 
  (     -3.0 , 14.0  ,-13.346094932438039) 
  (     -2.4999999999999996 , 14.0  ,-14.06307309858084) 
  (     -1.9999999999999998 , 14.0  ,-14.911584108583375) 
  (     -1.4999999999999996 , 14.0  ,-15.947400731837604) 
  (     -4.0 , 15.0  ,-13.103862095868129) 
  (     -3.5 , 15.0  ,-13.684826759251795) 
  (     -3.0 , 15.0  ,-14.300644295309743) 
  (     -2.4999999999999996 , 15.0  ,-14.977611699693156) 
  (     -1.9999999999999998 , 15.0  ,-15.773306632828346) 
  (     -1.4999999999999996 , 15.0  ,-16.770325286366184) 
  (     -4.0 , 16.0  ,-13.99458852355793) 
  (     -3.5 , 16.0  ,-14.543258398555217) 
  (     -3.0 , 16.0  ,-15.118858756804652) 
  (     -2.4999999999999996 , 16.0  ,-15.749818798351592) 
  (     -1.9999999999999998 , 16.0  ,-16.48586852638785) 
  (     -1.4999999999999996 , 16.0  ,-17.388883378247687) 
  (     -4.0 , 17.0  ,-15.031478253996914) 
  (     -3.5 , 17.0  ,-15.57100564977963) 
  (     -3.0 , 17.0  ,-16.138068556828937) 
  (     -2.4999999999999996 , 17.0  ,-16.746089478709596) 
  (     -1.9999999999999998 , 17.0  ,-17.429748041463892) 
  (     -1.4999999999999996 , 17.0  ,-18.273496323708518) 
  (     -4.0 , 18.0  ,-16.073697267246246) 
  (     -3.5 , 18.0  ,-16.650979068908587) 
  (     -3.0 , 18.0  ,-17.283013678324764) 
  (     -2.4999999999999996 , 18.0  ,-17.997915019017473) 
  (     -1.9999999999999998 , 18.0  ,-18.835348609068692) 
  (     -1.4999999999999996 , 18.0  ,-19.870143090441466) 
  (     -4.0 , 19.0  ,-17.145782470699555) 
  (     -3.5 , 19.0  ,-17.729523339107285) 
  (     -3.0 , 19.0  ,-18.342643968074906) 
  (     -2.4999999999999996 , 19.0  ,-19.022371148685984) 
  (     -1.9999999999999998 , 19.0  ,-19.82797923461081) 
  (     -1.4999999999999996 , 19.0  ,-20.857841500986858) 
      }; 
   \end{axis}
\end{tikzpicture}
  \caption{$\log_2(\IV)$ for the KDE with Sobol'+NUS for $s=1$ (left) and $s=20$ (right).}
  \label{fig:normal-iv-plots-3d}
\end{figure}

\pgfplotscreateplotcyclelist{defaultcolorlist}{%
	{blue!95!black,line width=0.9pt,mark=dot*,solid},
	{red!95!black,line width=0.9pt,mark=square*,solid},
	{green!98!black,line width=0.9pt,mark=triangle*,solid},
	{black,line width=1.0pt,mark=star,loosely dotted},
	{brown!85!black,mark=square,dashed},
	{purple!85!black,mark=triangle,dotted}}

\begin{figure}[!htbp] 
\small
\centering
 \begin{tikzpicture} \footnotesize
    \begin{axis}[ 
		cycle list name=defaultcolorlist,
    legend style={at={(1.02,1.45)}, anchor={north east}},
      xtick={1,2,3,4,5},
      xlabel=$s$,
       ylabel=$\beta$,
       grid,
      width = 5cm,
      ] 
      \addplot table[x=d, y=e19] { 
      d  	e19
      1.0   1.0375360933519602 
      2.0  	0.9817346869171588
      3.0  	1.0024735632771342
      4.0	1.0079356772531212
      5.0	1.0098464641661604
      }; 
      \addlegendentry{MC}
    \addplot table[x=d, y=e19] { 
      d  	e19
      1.0   3.002964157594687 
      2.0	1.674023021547137
      3.0	1.380082095661359
      4.0	1.2404245016455961
      5.0	1.1490586640557363
      }; 
      \addlegendentry{Strat}

      \addplot table[x=d, y=e19, color=green] { 
      d  	e19
      1.0   3.093222756847555
      2.0	2.111926217575321
      3.0	1.7856402926608306
      4.0	1.5396170904691246
      5.0	1.3006898210744569
      }; 
      \addlegendentry{S+LMS}
      \addplot table[x=d, y=e19] { 
      d  	e19
      1.0   2.790730716599069
      2.0	2.1006370956240703
      3.0	1.7981987663654881
      4.0	1.528617248734558
      5.0	1.2700687074532768
      }; 
      \addlegendentry{S+NUS}
    \end{axis}
  \end{tikzpicture}
 \begin{tikzpicture} \footnotesize
    \begin{axis}[ 
		cycle list name=defaultcolorlist,
    legend style={at={(1.3,1.5)}, anchor={north east}},
      xtick={1,2,3,4,5},
      xlabel=$s$,
       ylabel=$\delta$,
       grid,
      width = 5cm,
      ] 
      \addplot table[x=d, y=e19] { 
      d  	e19
      1.0   1.13374277298603 
      2.0  	1.145446900262749 
      3.0  	1.1533989941404468
      4.0	1.147698979356037
      5.0	1.146271176919399
      }; 
      \addlegendentry{MC}
    \addplot table[x=d, y=e19] { 
      d  	e19
      1.0   3.0019666686532966 
      2.0	2.3313257330242676
      3.0	2.1474460624167406
      4.0	2.018178303008548
      5.0	1.7772856950294127 
      }; 
      \addlegendentry{Strat}

      \addplot table[x=d, y=e19, color=green] { 
      d  	e19
      1.0   3.230161030420311
      2.0	3.195922010622765
      3.0	3.383409935126675
      4.0	3.0475951124178597
      5.0	2.295483684853652
      }; 
      \addlegendentry{S+LMS}
      \addplot table[x=d, y=e19] { 
      d  	e19
      1.0   3.0038390129161554
      2.0	3.1960100358542847
      3.0	3.357010159838343
      4.0	3.034734563708854
      5.0	2.3033112971451324
      }; 
      \addlegendentry{S+NUS}
    \end{axis}
  \end{tikzpicture}
 \begin{tikzpicture} \footnotesize
    \begin{axis}[ 
		cycle list name=defaultcolorlist,
    legend style={at={(1.02,1.45)}, anchor={north east}},
      xtick={1,2,3,4,5},
      xlabel=$s$,
			ylabel=e19,
      grid,
      width = 5cm,
      ] 
     \addplot table[x=d, y=e19] { 
      d  	e19
      1.0   17.00659
      2.0  	16.98827 
      3.0  	17.00639
      4.0	16.93194
      5.0	16.89269
      }; 
      \addlegendentry{MC}
     \addplot table[x=d, y=e19] { 
      d  	e19
      1.0   34.52727 
      2.0	22.51723
      3.0	19.61043
      4.0	18.35518
      5.0	17.71156
      }; 
      \addlegendentry{Strat}
      \addplot table[x=d, y=e19, color=green] { 
      d  	e19
      1.0   34.10130
      2.0	24.38811
      3.0	20.79490
      4.0	18.81637
      5.0	17.87857
      }; 
      \addlegendentry{S+LMS}
      \addplot table[x=d, y=e19] { 
      d  	e19
      1.0   34.05621
      2.0	24.37646
      3.0	20.79903
      4.0	18.86245
      5.0	17.79219
      }; 
      \addlegendentry{S+NUS}
    \end{axis}
  \end{tikzpicture}
  \caption{Estimated $\beta$, $\delta$, and e19 
			with MC, Stratification, Sobol'+LMS, and Sobol'+NUS.}
  \label{fig:normal-plots}
\end{figure}
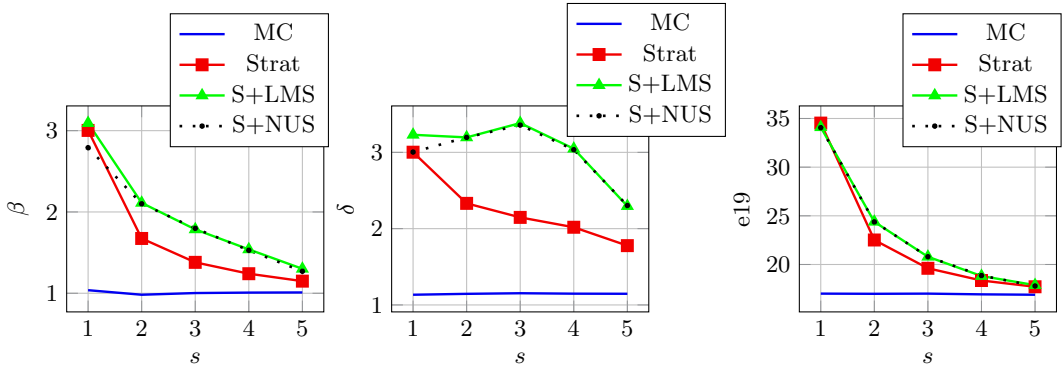

We estimate the density over $[a,b] = [-b,b] = [-2,2]$.
In our first experiment, we take $a_1 = \cdots = a_s = 1$, so all the coordinates have the same importance
(which is disadvantageous for RQMC).
Later, we will consider varying coefficients $a_j$. 
\Cref{tab:sumnormalnormalized-parameters-kde} 
summarizes the results when $B$ is estimated.
\hflorian{Coming back to an earlier discussion: we might save some space by removing $B$ from the table.}
%
For MC, our estimates given in the first column are based on experiments made with $s=1$, 
but are valid for all $s$, because the IV and ISB do not depend on $s$.
The estimated values for MC agree with the theory:
the exact asymptotic values are 
$\gamma = 0.2$, $\nu = 0.8$, and $\beta = \delta = 1$.
\hflorian{Ok, this is almost ridiculous to mention, 
but for the sake of uniformity: should we put it like $\gamma = 1/5$, $\nu = 4/5$ or $\gamma = 0.2$, $\nu = 0.8$?}%
The other columns give some results for Sobol'+LMS and Sobol'+NUS, for selected values of $s$.
For all $s > 1$ that we have tried, LMS and NUS give almost the same values.
The first rows give the dimension $s$, the $\ell_0$ found by pilot runs and used to fit
the IV model, the estimated parameters $C$, $\beta$, and $\delta$ of the IV model,
the fraction $R^2$ of variance explained by this model, and the estimated $B$. 
The other quantities are defined in \Cref{sec:experiment},
except for $\ell_* = -\log_2 \hat h_*(2^{19})$, which gives an idea of the optimal $h$ for $n=2^{19}$.

Recall that the rates $\tilde\nu$ and e19 are obtained from a second-stage experiment,
by using the estimated $\hat h_*(n)$ from the model in the first stage.
All the $R^2$ coefficients are close to 1, which means that 
the log-log linear model is reasonably good in the area considered.
The estimate of $B$ is $B\approx 0.0418$ (same first three digits) for all $s$ and all RQMC methods.
Thus, this estimator of $B$ has very little variance.
The MISE reduction of RQMC vs MC can be assessed by comparing their values of 
e19 given in the last row.
For example, with the KDE for $s=1$, the MISE for $n=2^{19}$ is approximately $2^{-34}$ 
for Sobol'+NUS compared to $2^{-17}$ for MC, i.e., about $2^{17} \approx 125{,}000$ times smaller.
For $s=2$, for both LMS and NUS, the MISE is about $2^{-24.3}$, which is about 150 times
smaller than for MC. 

\Cref{fig:normal-iv-plots-3d} gives a visual assessment of the fit of the linear 
model for $\log_2(\IV)$ in the selected region, for two values of $s$.
We made similar plots for several $s > 1$ and all point sets, and the linear approximation looked reasonable
in all cases.  
\hpierre{For Sobol' points with a random digital shift only, in $s=1$ dimension, something different happens and the 
MISE becomes extremely small; this is discussed in the online supplement.}
\hflorian{As far as I remember, nothing peculiar happened for Sobol'+LMS. We only observed this small variance when  the points were equally spaced. This is the case when we restrict LMS to the first 19 bits, because then we only permute the points or when we take Sobol'+digital shift, i.e. lattice+shift.}%
%
%
\Cref{fig:normal-plots} shows the estimated $\beta$, $\delta$, and e19, for $s=1,\dots,5$,
for various point sets.  Stratification, shown here and not in the table, 
is exactly equivalent to Sobol'+NUS for $s=1$, and somewhat less effective for $s > 1$.

One important observation from the plots and the last row of the table (e19)
is that for all $s$, the RQMC methods never have a larger MISE than MC.
Their MISE is much smaller for small $s$, and becomes almost the same as for MC
when $s$ gets large.  The MISE rate $\tilde\nu$ behaves similarly.
Another important observation is that the coefficients $\beta$ and $\delta$
in the IV model (which are both 1 with MC) are \emph{both} larger than 1 with RQMC.
For small $s$, with RQMC, $\beta$ is significantly larger than $\tilde\nu$, 
which means that the IV converges much faster as a function of $n$ when $h$ is fixed than 
when $h$ varies with $n$  to optimize the MISE.
This is explained by the large values of $\delta$, sometimes even larger than 3, 
which indicate that reducing $h$ to reduce the ISB increases the IV rapidly,
and this limits the MISE reduction that we can achieve.

Here $f$ is the standard normal density and
$R(f'') = [{ - b\left(2b^2- 1 \right) e^{-b^2} 
        + 3 \int_{0}^{b} e^{-x^2}\d x}]/{4\pi}$. 
For $b=2$, this gives $R(f'')\approx 0.19018$, so the true constant $B$ in the AISB is 
$B = R(f'')/4 \approx 0.04754$, whereas our estimate was 0.0418 
for all $s$ and all point sets.
The difference is not due to noise, but is a bias coming from the fact that we estimated 
$R(f'')$ via KDE with finite $n$.
We verified empirically that when we estimate these quantities with a larger $n$, 
the bias decreases slowly and appears to converge to 0 when $n\to\infty$.
%
\hflorian{Apparently, the bias in the estimation of $B$ is larger than the variance 
  and converges to 0 very slowly for $n\to\infty$.}
\hpierre{We really need to understand and explain where this difference comes from.
  What $n$ did you use for the estimation?  
	Is the estimate sensitive to $n$ and $h$?  That is, what happens when you use a larger $n$ 
	or you change $h$ a little bit?  We need to understand what is the source of error
  in estimating $R(f')$ and $R(f'')$, especially because it is not noise.  
	Are you sure the exact formulas are correct?} 

We repeated the density estimation experiment by using the exact values of $B$ instead of the 
estimated ones to choose $h$, and the results were very close for all $s$. 
In particular, the MISE rates $\tilde\nu$ and the values of e19 were almost the same.


We now take different coefficients (weights) $a_j$ in the linear combination of the $Z_j$ that defines $X$.
Our purpose is to illustrate that there are situations where RQMC can perform very
well with the KDE even when the dimension $s$ is large.  This can occur for example if the effective
dimension is not large; i.e., when $g(\bu)$ depends mostly on just a few coordinates of $\bu$,
and does not vary much with respect to the other coordinates \cite{vCAF97a,vLEC09f}.
To illustrate this, we take $a_j = 2^{-j}$ for $j=1,\dots,s$, and we repeat the same set of experiments
as we did for equal weights, to estimate the density over $[-2,2]$.

\begin{table}[!bhtp] 
 \centering
\caption{Parameter estimates for the KDE under Sobol'+LMS, for a weighted sum of normals with $a_j=2^{-j}$.}
\label{tab:sumnormalnormalized-weighted-parameters-kde}
\small
\begin{tabular}{l |l l l l l l l  }
  $s$      		& MC		& 2		& 4		& 10		& 20		& 50		& 100   \\
\hline
  $C$		  	& 0.171		& 0.173		& 0.038		& 6.7E-3	& 8.0E-3	& 7.3E-3	& 7.9E-3\\
 $\beta$		& 1.000		& 2.100		& 1.650		& 1.420		& 1.427		& 1.425		& 1.429\\
 $\delta$		& 1.137		& 3.189		& 3.745		& 3.626		& 3.582		& 3.604		& 3.603\\
 \hline
 $\hat{K}_*$		& 0.213		& 0.183		& 0.080		& 0.032		& 0.035		& 0.033		& 0.035\\
 $\hat{\nu}_*$		& 0.779		& 1.168		& 0.852		& 0.745		& 0.753		& 0.750		& 0.752\\
 $\tilde{\nu}$		& 0.774		& 1.176		& 0.892		& 0.750		& 0.730		& 0.758		& 0.752\\
 \hline
 e19			& 16.96		& 24.76		& 19.71		& 18.96		& 18.98		& 18.99		& 19.04
\end{tabular}
\end{table}

\cref{tab:sumnormalnormalized-weighted-parameters-kde} summarizes our findings for Sobol'+LMS,
for $s$ up to 100.  The results with Sobol'+NUS are very similar.
For $s=1$, the results are obviously the same as for our previous setting,
but they diverge when we increase $s$.
For example, in the previous setting, the MISE estimate with $n=2^{19}$ for $s=2$, 10, and 100, was
$2^{-24.38}$, $2^{-17.28}$, and $2^{-17.05}$, respectively, whereas with the new weights, it is
$2^{-24.76}$, $2^{-18.96}$, and $2^{-19.04}$, respectively.
For $s=100$, in particular, the MISE with RQMC and $n=2^{19}$ was about the same as for MC 
in the previous setting, and it is reduced by a factor of 4 in the present setting.
We also see from the table that in 10 or more dimensions, the convergence rate of the MISE is 
not improved, but the constant is improved (empirically).
As expected, when $s$ increases beyond about 10, all the model parameters appear to stabilize 
as a function of $s$.  In the previous setting, they were stabilizing around the MC values,
but now they stabilize to different values.  For example, in $s=100$ dimensions, $\beta$ was near
the MC value of 1, and now it is about 1.4.

\hflorian{
We further observe that with $a_j=2^{-j}$ we also gain over the unweighted case. This is not surprising
as the weights decrease very quickly so that the contributions of many summands to the error become extremely small. We can see
this in our experiment as the difference between the e19s of the weighted and the unweighted case increases from, roughly, 0.4 
in $s=2$ dimensions to 2 for $s=100$. 
The MISE gain of the weighted case compared to the unweighted case is caused by a reduction of the IV. Principally speaking,
these two models behave very similarly in that $\beta$ and $\delta$ change for $s\leq5$ and stabilize at certain values for $s>5$.
Contrary to the unweighted case, however, the IV parameters $\beta$ and $\delta$ do not approach the corresponding 
values for MC for large $s$. We see that even in 100 dimensions $\beta=1.4$, which is clearly beyond the MC rate.
Between $s=1$ and $s=5$ the values of $\beta$ decrease very slowly, while $\delta$ steadily increases.}

\subsection{Displacement of a cantilevel beam}
\label{sec:cantilever}

Bingham \cite{sBIN17a} gives the following simple model of the displacement $D$ of a cantilever 
beam with horizontal and vertical loads:
\begin{equation}       \label{eq:dformula}
	D = \frac{4L^3}{Ewt}\sqrt{\frac{Y^2}{t^4} + \frac{X^2}{w^4}}
\end{equation}
in which $L$ is the length of the beam, fixed to 100 inches, 
$w$ and $t$ are the width and thickness of the cross-section,
taken as 4 and 2 inches,
while $X$, $Y$, and $E$ are assumed independent and normally distributed
with means and standard deviations given as follows (in inches):
\smallskip
\begin{center}
		\begin{tabular}{lc|cc}
			\hline
			Description        & Symbol & Mean & St.\ dev.\ \\
			\hline
			Young's modulus & $E$ & $2.9\times 10^7$ & $1.45\times 10^6$\\
			Horizontal load & $X$ & $500$ & $100$ \\
			Vertical load   & $Y$ & $1000$ & $100$ \\
			\hline\vspace{0.25em}
		\end{tabular}
\end{center}

We want to estimate the density of the relative displacement $\tilde X = D/D_0-1$, where $D_0=2.2535$ inches.
\hflorian{I just realized that our experiments used this normalized version (also proposed on Bingham's webpage)!}%
Here, the exact density is unknown, so unbiased estimators of the ISB and the MISE are not available,
but we can estimate the AISB as in the previous example, and use it to estimate the optimal $h$ and the MISE.
A plot of the estimated density, obtained with a  KDE with Sobol'+NUS and $n=2^{19}$ points, 
is given in \Cref{fig:canti-density}.
For the experiments reported here, we estimate the density of $\tilde X$ over the interval 
$[0.407,1.515]$, which covers about 99\% of the density (it excludes roughly 0.5 \% on each side). 

\begin{figure}[!htbp]  
\centering
 \begin{tikzpicture} \footnotesize
    \begin{axis}[ 
     x label style={at={(axis description cs:1.05,-0.03)},anchor=north},
			xlabel=$x$,
      ylabel=estimated density,
      xmin=0.40,
      xmax=1.50,
			ymin=0,
       grid,
       width = 8.4cm,
      height = 3.6cm,
      ] 
      \addplot table[x=x,y=density,color=red,mark= ,style=solid] { 
x 	density
0.41807	 0.08558167144426815
0.44024	 0.11332302977909534
0.4624	 0.14813434392133895
0.48456	 0.19065452655621257
0.50671	 0.2400963398666585
0.52888	 0.30186492821986105
0.55104	 0.37019128680379065
0.57319	 0.45100640839468614
0.59536	 0.542081284523286
0.61751	 0.6403779971227069
0.63968	 0.7494390420077861
0.66184	 0.8645022005120548
0.684	 0.985131750891621
0.70615	 1.1096320907982844
0.72832	 1.2329930255167805
0.75048	 1.3546412549115223
0.77264	 1.4685226219688443
0.7948	 1.5750350357062386
0.81695	 1.6672129445570856
0.83912	 1.7452761089872448
0.86128	 1.806515486525151
0.88344	 1.8492150362716004
0.9056	 1.8709919523314316
0.92775	 1.873383782390795
0.94991	 1.8541460917982937
0.97208	 1.8165247623699547
0.99424	 1.7613273961968763
1.0164	 1.6891829238748057
1.03856	 1.6024252075211496
1.06071	 1.507122327330587
1.08287	 1.40206558931345
1.10504	 1.2919807954973652
1.1272	 1.1803886268942598
1.14936	 1.065461756008908
1.17151	 0.954948230863475
1.19367	 0.8484104439659246
1.21584	 0.744999261660289
1.238	 0.6506049701582177
1.26016	 0.561614256986875
1.28231	 0.48210534672817473
1.30447	 0.40958548316144194
1.32664	 0.34445455334580055
1.3488	 0.28896858396510766
1.37096	 0.23892128633065296
1.39312	 0.1955975022715291
1.41528	 0.16151426460477153
1.43744	 0.1299676959165197
1.4596	 0.10449636637928005
1.48176	 0.08416206543128413
1.50392	 0.06691130551507099
};
    \end{axis}
\end{tikzpicture}
  \caption{Estimated density of $\tilde X$, the relative displacement of a cantilever beam.}
\label{fig:canti-density}
\end{figure}
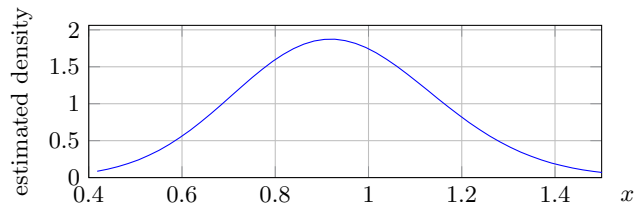

\begin{figure}[!hbtp] 
\centering
 \begin{tikzpicture} \footnotesize  
   \begin{axis}[ 
		cycle list name=defaultcolorlist,
    legend style={at={(1.3,1.35)}, anchor={north east}},
      xlabel=$\log_2(n)$,
       grid,
       width=6cm,
			 height=4cm,
      ] 
       \addplot table[x=log(n),y=log(MISE)] { 
      log(n)  log(MISE)
      14  -10.90937560535595
      15  -11.67606653490286
      16  -12.44275746444978
      17  -13.20944839399669
      18  -13.97613932354360
      19  -14.74283025309051 
      }; 
      \addlegendentry{MC}
      
      \addplot table[x=log(n),y=log(MISE)] { 
      log(n)  log(MISE) 
      14  -12.96211333219881
      15  -13.86498279702762
      16  -14.76785226185644
      17 -15.67072172668525
      18  -16.57359119151407
      19  -17.47646065634289 
      }; 
      \addlegendentry{Strat}
      \addplot table[x=log(n),y=log(MISE)] { 
      log(n)  log(MISE)
      14  -15.69682195516851
      15  -16.67780002050783
      16  -17.65877808584714
      17 -18.63975615118646
      18  -19.62073421652578
      19  -20.60171228186510 
      }; 
      \addlegendentry{Sobol'+LMS}
      \addplot table[x=log(n),y=log(MISE)] { 
      log(n)  log(MISE) 
      14  -15.712980379532
      15  -16.687189407095
      16  -17.661398434658
      17  -18.635607462221
      18  -19.609816489784
      19  -20.584025517348
      }; 
      \addlegendentry{Sobol'+NUS}
    \end{axis}
 \end{tikzpicture}
\begin{tikzpicture} \footnotesize  
   \begin{axis}[ 
		cycle list name=defaultcolorlist,
    legend style={at={(1.3,1.35)}, anchor={north east}},
      xlabel=$\log_2(n)$,
      grid,
			width=6cm,
			height=4cm,
      ] 
        \addplot table[x=log(n),y=log(IV)] { 
        log(n)  log(IV) 
        14  -10.05334549950600
        15  -11.04389450988958
        16  -12.03444352027316
        17  -13.02499253065674
        18  -14.01554154104032
        19  -15.00609055142390
        }; 
        \addlegendentry{MC}
        \addplot table[x=log(n),y=log(IV)] {
        log(n)  log(IV) 
        14  -12.115131310272240
        15  -13.495009893646634
        16  -14.874888477021027
        17  -16.254767060395421
        18  -17.634645643769814
        19  -19.014524227144208
        }; 
      \addlegendentry{Strat}
%
      \addplot table[x=log(n),y=log(IV)] { 
      log(n)  log(IV) 
      14  -16.14708802034027
      15  -18.08995993207298
      16  -20.03283184380570
      17  -21.97570375553842
      18  -23.91857566727114
      19  -25.86144757900386
      }; 
      \addlegendentry{Sobol'+LMS}
%
      \addplot table[x=log(n),y=log(IV)] { 
      log(n)  log(IV) 
      14  -16.17773814021874
      15  -18.10990384092199
      16  -20.04206954162525
      17  -21.97423524232851
      18  -23.90640094303176
      19  -25.83856664373502
      }; 
      \addlegendentry{Sobol'+NUS}
    \end{axis}
\end{tikzpicture}
 \caption{Estimated MISE (left) and IV (right) as a function of $n$ 
   for $h = 2^{-6}$, for the cantilever example. }
 \label{fig:canti-mise}
\end{figure}
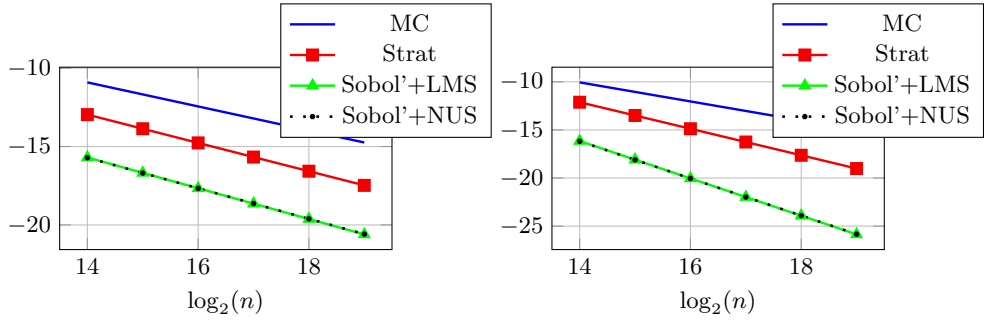

\begin{table}
 \centering
\caption{Experimental results for the KDE, for the displacement of a cantilever beam,
   over the {interval $[0.407,1.515]$.}}
\label{tab:canti-parameters-99} 
\small
\begin{tabular}{l | l l l l}
			& MC	& Strat	& LMS		& NUS\\
\hline
$C$ 			  & 0.109	& 0.022	& 1.8E-4	& 1.5E-4\\
$\beta$			& 0.991	& 1.380	& 1.943		& 1.932\\
$\delta$ 		& 1.168	& 2.113	& 3.922		& 3.933\\
$R^{2}$			& 0.999	& 0.999	& 0.999		& 0.999\\
$B$			    & 107.4	& 107.2	& 107.1		& 107.1\\
\hline 
$\hat{\kappa}_*$	& 0.208	& 0.225	& 0.186		& 0.182\\
$\hat{\gamma}_*$	& 0.192	& 0.226	& 0.245		& 0.244\\
$\ell_*$		      & 5.909	& 6.443	& 7.090		& 7.085\\
$\hat{K}_*$		    & 0.885	& 0.800	& 0.256		& 0.237\\
$\hat{\nu}_*$		  & 0.767	& 0.903	& 0.981		& 0.974\\
\hline
e19			          & 14.74	& 17.48	& 20.60		& 20.58
\end{tabular} 
\end{table}

\Cref{tab:canti-parameters-99} gives the parameter estimates from our experiment.
RQMC increases the rate $\beta$  from 1 to about 2. 
However, $\delta$ increases even more, from 1 to about 4. 
This means that although the variance decreases much faster than for MC as a function of $n$ for fixed $h$,
we cannot afford to decrease $h$ very much to decrease the bias, 
so the MISE reduction is limited.
RQMC improves both the estimated rate $\hat\nu_*$ and the constant $K$ in the MISE model.

\Cref{fig:canti-mise} shows the estimated MISE as a function of $n$
(with the estimated optimal $h$),
as well as the estimated IV as a function of $n$, all in log scale.
The results for Sobol'+LMS and Sobol'+NUS are
practically indistinguishable in those plots.
We see that although the MISE rate (slope) is not improved much by RQMC,
the MISE is nevertheless reduced by a significant factor. 
With $n=2^{19}$, the MISE is almost $2^{6}=64$ times smaller with Sobol'+LMS than with MC.
\hflorian{In case you wanted to write Sobol'+NUS, it's pretty much the same factor.}
\hflorian{should we stress more that we talk about the estimated MISE and not the 
 true MISE, or do you think that this is clear?}
For fixed $h$, the IV converges at a faster rate with RQMC than with MC.
\hpierre{It would be nice to see the faster convergence rate of the IV for fixed $h$,
  as we had before. For $h$, you can take perhaps a value close to the optimal $h$ 
	for $n=2^{16}$.  }

Here, $g$ is strictly decreasing in $E$ and strictly increasing in both $X$ and $Y$. 
Therefore, \cref{cor:miseStrat} applies. The asymptotic parameter values are
$\beta=4/3$, $\delta=2$, and $\nu=0.889$, which are very close to what we found 
empirically for stratification (see \cref{tab:canti-parameters-99}).

\subsection{A weighted sum of lognormals}

In this example, we estimate the density of a weighted sum of lognormals: 
$
  X   
	  = \sum_{j=1}^s w_j \exp(Y_j)
$
where $\bY = (Y_1,\dots,Y_s)^\tr$ has a multinormal distribution with mean vector $\bmu$
and covariance matrix $\bC$.  Let $\bC = \bA\bA^\tr$ be a decomposition of $\bC$.
To generate $\bY$, we generate $\bZ$ a vector of $s$ independent standard normals by inversion,
then put $\bY = \bmu + \bA\bZ$. 
For MC, the choice of decomposition does not matter, but for RQMC it does,
and here we take the decomposition used in principal component analysis (PCA) \cite{fGLA04a,vLEC09f}.
We also tried sequential sampling (SS) and Brownian bridge sampling (BBS) 
but with them, RQMC did not improve the IV significantly as we will see with PCA.

This model has several applications.  
In one of them, for some positive constants $\rho$ and $s_0$, by taking $w_j = s_0 (s-j+1)/s$, 
$e^{-\rho} \max(X-K,0)$ is the payoff of a financial option based on the average value of a 
stock or commodity price at $s$ observation times, under a geometric Brownian motion process.
Estimating the density of this random payoff in its positive part 
is equivalent to estimating the density of $X$ over the interval $(K,\infty)$
(for simplicity we ignore the scaling factor $e^{-\rho}$).
When we compute the KDE here, the realizations of $X$ smaller than $K$ 
are \emph{not} discarded; they contribute to the KDE slightly above $K$.
Discarding them would introduce a significant bias in the KDE due to a boundary effect at~$K$.

For our numerical experiment, we take this special case with the same parameters as in \cite{vLEC18a}: 
$s=12$, $s_0 = 100$, and $K =101$.
The matrix $\bC$ is defined indirectly as follows.
We have 
$
 Y_j = Y_{j-1} (\mu-\sigma^{2}) j/s + \sigma B(j/s)
$
where $Y_0 = 0$, $\sigma= 0.12136$, $\mu= 0.1$,
and $B(\cdot)$ is a standard Brownian motion.
We estimate the density of $\tilde X = X-K$ over the interval $[a,b] = [0,\, 27.13]$.
Approximately 0.5\% of the density lies on the right of this interval 
and 29.05\% lies on the left (this is when the option brings no payoff).
\Cref{fig:option-density} shows a plot of the estimated density of $\tilde X = X-K$
obtained from a KDE with Sobol'+NUS and $n=2^{19}$ points.

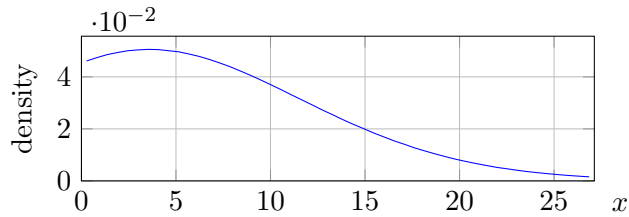
\begin{figure}[!htbp]
\centering
 \begin{tikzpicture} 
    \begin{axis}[ 
      x label style={at={(axis description cs:1.05,-0.05)},anchor=north},
      xlabel=$x$,
      ylabel=density,
      xmin=0,
      xmax=27.13,
			ymin=0,
      grid,
      width = 8.4cm,
      height = 3.5cm,
      ] 
      \addplot table[x=x,y=density,color=red,mark= ,style=solid] { 
x 	density
0.2713	 0.04611232690567282
0.8139	 0.04738904939926527
1.3565	 0.04854665641147895
1.89909	 0.04935278005791458
2.4417	 0.05003779663221299
2.9843	 0.05036511319024022
3.5269	 0.05057717669617984
4.06949	 0.05048076499518774
4.6121	 0.05005610817912746
5.1547	 0.04962484608337289
5.69729	 0.04877657435905761
6.2399	 0.047846431481651915
6.7825	 0.04666690897059682
7.3251	 0.045312565641749396
7.86769	 0.043895468554901014
8.4103	 0.0422501939453037
8.9529	 0.04056514795509081
9.4955	 0.038769040020104346
10.0381	 0.036903556535522165
10.5807	 0.035015700886604606
11.12329	 0.03303385710823513
11.66589	 0.031126632588306533
12.2085	 0.029205180049397955
12.7511	 0.027238900533157438
13.2937	 0.02542143420894306
13.8363	 0.023532372619457877
14.3789	 0.021757996814559116
14.9215	 0.020120633022231143
15.46409	 0.01841597424715464
16.00669	 0.016890696049540768
16.54929	 0.015367555093975428
17.0919	 0.014061417020858663
17.6345	 0.012662417554006085
18.1771	 0.011497539468526559
18.7197	 0.01036552130438518
19.2623	 0.00931255531002168
19.8049	 0.008323919530578015
20.3475	 0.007459994626408142
20.8901	 0.0066479772139835846
21.4327	 0.005911735485574319
21.97529	 0.005193871772515958
22.51789	 0.004651090710484343
23.06049	 0.004076965269290676
23.6031	 0.0035709518321854843
24.1457	 0.0031425178338569614
24.6883	 0.002755272862473679
25.2309	 0.002403992975843618
25.7735	 0.002080279965777982
26.3161	 0.00181904699347215
26.8587	 0.001562579253971936
};
    \end{axis}
  \end{tikzpicture}
  \caption{Estimated density of the option payoff $X-K$.}
\label{fig:option-density}
\end{figure}

\begin{table}[hbt]
 \centering
\caption{Experimental results for the density estimation of the option payoff over the interval $[0,\, 27.13]$.}
\label{tab:option-parameters} 
\small
\begin{tabular}{l | l l l }
			& MC		& LMS		& NUS\\
\hline
$C$ 			  & 0.171		& 0.110		& 0.097\\
$\beta$			& 1.005		& 1.671		& 1.663\\
$\delta$ 		& 1.151		& 4.907		& 4.930\\
$R^{2}$			& 0.999		& 0.990		& 0.990\\
$B$			    & 1.1E-6	& 1.1E-6	& 1.1E-6\\
\hline 
$\hat{\kappa}_*$	& 7.953		& 3.717		& 3.657\\
$\hat{\gamma}_*$	& 0.195		& 0.188		& 0.186\\
$\ell_*$		      & 0.715		& 1.670		& 1.668\\
$\hat{K}_*$		    & 0.020		& 3.9E-4	& 3.6E-4\\
$\hat{\nu}_*$		  & 0.780		& 0.750		& 0.745\\
\hline
e19			          & 20.45		& 25.59		& 25.58
\end{tabular}
\end{table}

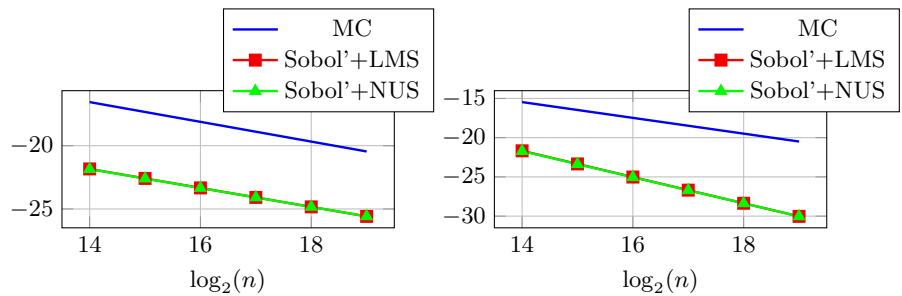
\begin{figure}[!hbtp] \footnotesize  
\centering
 \begin{tikzpicture} 
   \begin{axis}[ 
		cycle list name=defaultcolorlist,
    legend style={at={(1.12,1.6)}, anchor={north east}},
      xlabel=$\log_2(n)$,
      grid,
			width=6cm,
			height=3.4cm,
      ] 
       \addplot table[x=log(n),y=log(MISE)] { 
      log(n)  log(MISE) 
      14  -16.55333660590604
      15  -17.33362060423514
      16  -18.11390460256424
      17  -18.89418860089335
      18  -19.67447259922245
      19  -20.45475659755155
      }; 
      \addlegendentry{MC}
      
      \addplot table[x=log(n),y=log(MISE)] { 
      log(n)  log(MISE) 
      14  -21.83444970168763
      15  -22.58487944386306
      16  -23.33530918603850
      17  -24.08573892821394
      18  -24.83616867038937
      19  -25.58659841256481
      }; 
      \addlegendentry{Sobol'+LMS}
      \addplot table[x=log(n),y=log(MISE)] { 
      log(n)  log(MISE) 
      14  -21.85522139071330
      15  -22.60023463862968
      16  -23.34524788654606
      17  -24.09026113446244
      18  -24.83527438237882
      19  -25.58028763029519
      }; 
      \addlegendentry{Sobol'+NUS}
    \end{axis}
 \end{tikzpicture}
\begin{tikzpicture} 
   \begin{axis}[ 
		cycle list name=defaultcolorlist,
    legend style={at={(1.22,1.6)}, anchor={north east}},
      xlabel=$\log_2(n)$,
      grid,
			width=6cm,
			height=3.4cm,
      ] 
       \addplot table[x=log(n),y=log(MISE)] { 
      log(n)  log(MISE) 
      14  -15.4676
      15  -16.4724
      16  -17.4772
      17  -18.4819
      18  -19.4867
      19  -20.4915
      }; 
      \addlegendentry{MC}
      
      \addplot table[x=log(n),y=log(MISE)] { 
      log(n)  log(MISE) 
      14  -21.6764
      15  -23.3474
      16  -25.0183
      17  -26.6893
      18  -28.3602
      19  -30.0312
      }; 
      \addlegendentry{Sobol'+LMS}

      \addplot table[x=log(n),y=log(MISE)] { 
      log(n)  log(MISE) 
      14  -21.7169
      15  -23.3801
      16  -25.0433
      17  -26.7065
      18  -28.3696
      19  -30.0328
      }; 
      \addlegendentry{Sobol'+NUS}
    \end{axis}
 \end{tikzpicture}
 \caption{Estimated MISE as a function of $n$ (left) and 
    estimated IV as a function of $n$ for $h=1/2$ (right).} 
 \label{fig:option-mise}
\end{figure}

\Cref{tab:option-parameters} summarizes the results of our experiments.
Again, the linear model for the IV fits extremely well in the selected area.
RQMC improves $\beta$ from 1 to about $5/3$, which is significant,
but at the same time $\delta$ increases from about 1.1 to nearly 5.
This means we are very limited in how much we can decrease $h$ to reduce the bias.
On the other hand, this empirical $\delta$ is not as bad as the one in the AIV bound 
of \cref{cor:miseHK}, which gives $\delta = 2s = 24$. 
The estimate of $B$ is again about the same for all point sets.
Somewhat surprisingly, in the region considered, 
the estimated MISE rate $\hat\nu_*$ is not better for
RQMC than for MC, due to the large $\delta$,
but the MISE is nevertheless about 32 times smaller for RQMC than for MC in the 
range of interest,
as shown in \Cref{fig:option-mise}, for which $h$ was taken 
as the estimated optimal $h$ from our model, as a function of $n$.
That is, RQMC is truly beneficial for estimating the payoff density in this 12-dimensional example.
In the lower panel, we see that the estimated IV for fixed $h$
converges faster with RQMC than with MC.

\hpierre{When estimating the mean $\EE[X]$ instead of the density, 
the RQMC improvement is more spectacular. 
For comparison, with Sobol'+LMS, the variance converges approximately as $\cO(n^{-1.9})$ 
compared with $\cO(n^{-1})$ for MC,
the variance is divided by a factor of about two millions compared with MC for $n = 2^{20}$,
and there is no bias.  See \cite{vLEC18a}, Table 3.}

\section{Conclusion}
\label{sec:conclusion}

We explored RQMC combined with KDEs to estimate a density by simulation.  
RQMC can improve the IV and the MISE, sometimes by large factors, in situations
in which the (effective) dimension is small.
The improvement is more limited when the effective dimension is large.
We also found that the IV improvement degrades quickly as a function of $h$ when $h\to 0$.
In our empirical experiments, the IV was never larger with KDE+RQMC 
than with KDE+MC, and was often much smaller.
\hflorian{I have checked the data of the first examples and I have not found one single instance, where the IV with RQMC is larger than with MC. The other examples are in the paper and I haven't found anything there either.}

\hpierre{In the online supplement, we report a similar analysis for histograms instead of KDEs,
and find that RQMC also brings some improvement, but more limited than with the KDE.
We also provide additional experimental results and details.}

\hpierre{-- Must be completed with more positive statements....   Summarize our bounds.
  Summarize empirical results.  Suggestions for further work?  }

%

\section*{Acknowledgments}


The idea of this work started during a workshop at the Banff International Research 
Station (BIRS) in October 2015.  Part of the research was accomplished within a research
program on quasi-Monte Carlo sampling methods at the 
Statistical and Applied Mathematical Sciences Institute (SAMSI), in North Carolina, 
in 2017--2018.  We thank Ilse Ipsen for her support in organizing this program.


\end{document}